\numberwithin{equation}{section}
\theoremstyle{plain}
\newtheorem{thm}{Theorem}[section]
\newtheorem{lem}[thm]{Lemma}
\newtheorem{prop}[thm]{Proposition}
\theoremstyle{definition}
\newtheorem{defn}[thm]{Definition}
\newtheorem{exmp}[thm]{Example}
\theoremstyle{remark}
\newtheorem{rem}[thm]{Remark}
\def\Acal{\mathcal{A}}
\def\Dcal{\mathcal{D}}
\def\Ecal{\mathcal{E}}
\def\Fcal{\mathcal{F}}
\def\Hcal{\mathcal{H}}
\def\Scal{\mathcal{S}}
\def\Wcal{\mathcal{W}}
\def\Cbb{\mathbb{C}}
\def\Ebb{\mathbb{E}}
\def\Nbb{\mathbb{N}}
\def\Pbb{\mathbb{P}}
\def\Rbb{\mathbb{R}}
\def\Wbb{\mathbb{W}}
\def\1bb{\mathbbm{1}}
\DeclareMathOperator{\HS}{HS}
\let\epsilon\varepsilon
\let\phi\varphi
\title{The damped stochastic wave equation on p.c.f. fractals\footnote{Keywords: Stochastic partial differential equation, wave equation, fractal, Dirichlet form}}
\author{Ben Hambly\footnote{Mathematical Institute, University of Oxford, Woodstock Road, Oxford, OX2 6GG, UK. Email: hambly@maths.ox.ac.uk.}\enspace  and Weiye Yang\footnote{Mathematical Institute, University of Oxford, Woodstock Road, Oxford, OX2 6GG, UK. Email: weiye.yang@maths.ox.ac.uk. ORCiD: 0000-0003-2104-1218.}}
\date{15 March 2018} 
\begin{document}
\maketitle
\begin{abstract}
A p.c.f. fractal with a regular harmonic structure admits an associated Dirichlet form, which is itself associated with a Laplacian. 
This Laplacian enables us to give an analogue of the damped stochastic wave equation on the fractal. We show that a unique 
function-valued solution exists, which has an explicit formulation in terms of the spectral decomposition of the Laplacian. We then use a 
Kolmogorov-type continuity theorem to derive the spatial and temporal H\"{o}lder exponents of the solution. Our results extend 
the analogous results on the stochastic wave equation in one-dimensional Euclidean space. It is known that no function-valued 
solution to the stochastic wave equation can exist in Euclidean dimension two or higher. The fractal spaces that we work with always 
have spectral dimension less than two, and show that this is the right analogue of dimension to express the
``curse of dimensionality'' of the stochastic wave equation. Finally we prove some results on the convergence to equilibrium of the solutions.
\end{abstract}

\section{Introduction}

The aim of this paper is to investigate the properties of some hyperbolic stochastic partial differential equations (SPDEs) on finitely ramified fractals. In 
one dimension \cite{walsh1986} motivated this problem as understanding the behaviour of a guitar string in a sandstorm. That is we have a 
one-dimensional string which is forced by white noise at every point in time and space and are interested in the `music' - the properties of the resulting
waves induced in the string. In the fractal setting we may think of the vibrations of a fractal drum in a sandstorm. For a two-dimensional drum, 
it is known that the solutions to the stochastic wave equation are no longer functions and thus it is of interest to see what happens in the case of
finitely ramified fractals which behave analytically as objects with dimension between one and two. As yet the theory for the behaviour of waves 
propagating through a fractal is much less developed than that for the diffusion of heat and we will not discuss such deterministic waves. 
Instead we consider the regularity properties of the waves starting from rest and arising from forcing by white noise, which are easier to capture, 
as it is the noise and its smoothing via the Laplacian which are crucial to understanding the behaviour of the waves.

The damped stochastic wave equation on $\Rbb^n$, $n \geq 1$ is the SPDE given by
\begin{equation}\label{RnSWE}
\begin{split}
\frac{\partial^2 u}{\partial t^2}(t,x) &= -2\beta \frac{\partial u}{\partial t}(t,x) + \Delta u(t,x) + \xi(t,x),\\
u(0,\cdot) &= \frac{\partial u}{\partial t}(0,\cdot) = 0,
\end{split}
\end{equation}
where $\beta \geq 0$, $\Delta = \Delta_x$ is the Laplacian on $\Rbb^n$ and $\xi$ is a space-time white noise on $[0,\infty) \times \Rbb^n$, where 
we interpret $x \in \Rbb^n$ as space and $t \in [0,\infty)$ as time. The equation \eqref{RnSWE} can equivalently be written as a system of stochastic 
evolution equations in the following way:
\begin{equation*}
\begin{split}
du(t) &= \dot{u}(t)dt,\\
d\dot{u}(t) &= -2\beta \dot{u}(t)dt + \Delta u(t)dt + dW(t),\\
u(0) &= \dot{u}(0) = 0 \in L^2(\Rbb^n),
\end{split}
\end{equation*}
where $W$ is a cylindrical Wiener process on $L^2(\Rbb^n)$, and the solution $u$ and its (formal) derivative $\dot{u}$ are processes taking values in 
some space of functions on $\Rbb^n$. Here we have used instead the differential notation of stochastic calculus, and one should not presume any a 
priori relationship between $u$ and $\dot{u}$. The damped stochastic wave equation (SWE) was introduced in \cite{Cabana1970} in the case $n = 1$, 
and a unique solution was found via a Fourier transform. If $\beta = 0$, there is no damping, and this is the stochastic wave equation. The solution then 
has a neat characterisation given in \cite[Theorem 3.1]{walsh1986} as a rotated modified Brownian sheet in $[0,\infty) \times \Rbb$, and this immediately 
implies that it is jointly H\"{o}lder continuous in space and time for any H\"{o}lder exponent less than $\frac{1}{2}$. These properties, however, do not 
carry over into spatial dimensions $n \geq 2$. Indeed, for $n \geq 2$ a solution to \eqref{RnSWE} still exists, but it is not function-valued. It is necessary 
to expand our space beyond $L^2(\Rbb^n)$ to include certain distributions in order to make sense of the solution. This is related to the fact that 
$n$-dimensional Brownian motion has local times if and only if $n = 1$, see \cite{foondun2011} and further references. There is thus a distinct change 
in the behaviour of the SPDE \eqref{RnSWE} between dimensions $n = 1$ and $n \geq 2$. One of the aims of the present paper is to investigate the 
behaviour of the SPDE in the case that dimension (appropriately interpreted) is in the interval $[1,2)$. When does a function-valued solution exist, and 
if it does, what are its space-time H\"{o}lder exponents? To answer these questions we introduce a class of fractals.

The theory of analysis on fractals started with the construction of a symmetric diffusion on the two-dimensional Sierpinski gasket in \cite{goldstein1987},
\cite{kusuoka1987} and \cite{barlow1988}, which is now known as \textit{Brownian motion on the Sierpinski gasket}. The field has grown quickly since 
then; see\cite{kigami2001} and \cite{barlow1998} for analytic and probabilistic introductions respectively.  In \cite{kigami2001} it is shown that a certain 
class of fractals, known as \textit{post-critically finite self-similar} (or \textit{p.c.f.s.s.}) sets with \textit{regular harmonic structures}, admit operators 
$\Delta$ akin to the Laplacian on $\Rbb^n$. This class includes many well-known fractals such as the $n$-dimensional Sierpinski gasket (for $n \geq 2$)
and the Vicsek fractal, though not the Sierpinski carpet. The operators $\Delta$ generate symmetric diffusions on their respective fractals in the same way 
that the Laplacian on $\Rbb^n$ is the generator of Brownian motion on $\Rbb^n$, and we therefore refer to them also as ``Laplacians'', see 
\cite{barlow1998}. In particular, the existence of a Laplacian $\Delta$ on a given fractal $F$ allows us to formulate PDEs analogous to the heat equation 
and the wave equation on $F$. The heat equation on $F$ has been widely studied, see \cite[Chapter 5]{kigami2001} and many other papers showing results 
such as sub-Gaussian decay of the heat kernel. It is possible in the same way to formulate certain SPDEs on these fractals; for example the stochastic heat 
equation \cite{hambly2016} and, the subject of the present paper,  the damped stochastic wave equation on $F$. The \textit{spectral dimension} $d_s$, 
defined as the exponent for the asymptotic scaling of the eigenvalue counting function of $\Delta$, for any of these fractals satisfies $d_s < 2$, and is the 
correct definition of dimension to use when investigating the analytic properties of the SPDE. Since all of our fractals are compact, we can use spectral 
methods to vastly simplify the problem and find a solution explicitly in terms of the eigenvalues and eigenfunctions of the Laplacian.

Previous work on hyperbolic PDEs and SPDEs on fractals is sparse. The wave equation was first introduced in \cite{kusuoka1987}. 
Since then, there have been two strands of work, either focusing on bounded or on unbounded fractals. In the case of bounded fractals \cite{dalrymple1999}
gave strong evidence that there would be infinite propagation speed for the deterministic wave equation and \cite{hu2002} showed existence and 
uniqueness for a non-linear wave equation. For the unbounded case there is work by \cite{KusuokaZhou1998} and \cite{Strichartz2010} discussing 
the long time behaviour of waves on manifolds with large scale fractal structure and on fractals themselves.  

In \cite{foondun2011} it is mentioned that the stochastic heat equation on certain fractals has a 
so-called ``random-field'' solution as long as the Hausdorff dimension of the fractal is less than 2. The stochastic wave equation is studied elsewhere in 
that paper but an analogous result is not given. In \cite{hambly2016} the stochastic heat equation on p.c.f.s.s. sets with regular harmonic structures is 
shown to have continuous function-valued solutions, as the spectral dimension is less than 2, and its spatial and temporal H\"{o}lder exponents are 
computed; this can be seen to be the direct predecessor of the present paper and is the source of many of the ideas that we use in the following sections.

The structure of the present paper is as follows: In the next subsection we set up the problem, state the precise SPDE to be solved and summarise the 
main results of the paper. In Section \ref{sec:existenceSWE} we make precise the definition of a solution to the damped stochastic wave equation and 
prove the existence of a unique solution $u$ in the form of an $L^2$-valued process. We show that it is a solution in both a ``mild'' sense and a ``weak'' 
sense. Then, in Section \ref{sec:regsol}, we show that this solution is H\"{o}lder continuous in $L^2$ and that the point evaluations $u(t,x)$ are well-defined
random variables. The latter is a necessary condition for us to be able to consider matters of continuity in space and time. In Section \ref{sec:holderst} 
we utilise a Kolmogorov-type continuity theorem for fractals proven in \cite{hambly2016} to deduce the spatial and temporal H\"{o}lder exponents of 
the solution $u$. In Section \ref{sec:equilib} we give results that describe the long-time behaviour of the solutions for any given set of parameters, in 
particular whether or not they eventually settle down into some equilibrium measure.

\subsection{Description of the problem}

We use an identical set-up to \cite{hambly2016}. Let $M \geq 2$ be an integer. Let $(F, (\psi_i)_{i=1}^M)$ be a connected p.c.f.s.s. set (see \cite{kigami2001}) such that $F$ is a compact metric space and the $\psi_i: F \to F$ are injective strict contractions on $F$. Let $I = \{ 1,\ldots,M \}$ and for each $n \geq 0$ let $\Wbb_n = I^n$. Let $\Wbb_* = \bigcup_{n \geq 0} \Wbb_n$ and let $\Wbb = I^\Nbb$. We call the sets $\Wbb_n$, $\Wbb_*$ and $\Wbb$ \textit{word spaces} and we call their elements \textit{words}. Note that $\Wbb_0$ is a singleton containing an element known as the \textit{empty word}. We use the notation $w = w_1w_2w_3\ldots$ with $w_i \in I$ for words $w \in \Wbb_* \cup \Wbb$. For a word $w = w_1, \ldots ,w_n \in \Wbb_*$, let $\psi_w = \psi_{w_1} \circ \cdots \circ \psi_{w_n}$ and let $F_w = \psi_w(F)$. If $w$ is the empty word then $\psi_w$ is the identity on $F$.

If $\Wbb$ is endowed with the standard product topology then there is a canonical continuous surjection $\pi: \Wbb \to F$ given in \cite[Lemma 5.10]{barlow1998}. Let $P \subset \Wbb$ be the post-critical set of $(F, (\psi_i)_{i=1}^M)$, which is finite by assumption. Then let $F^0 = \pi(P)$, and for each $n \geq 1$ let $F^n = \bigcup_{w \in \Wbb_n} \psi_w(F^0)$. Let $F_* = \bigcup_{n = 0}^\infty F^n$. It is easily shown that $(F^n)_{n\geq 0}$ is an increasing sequence of finite subsets and that $F_*$ is dense in $F$.

Let the pair $(A_0,\textbf{r})$ be a regular irreducible harmonic structure on $(F, (\psi_i)_{i=1}^M)$ such that $\textbf{r} = (r_1,\ldots,r_M) \in \Rbb^M$ for some constants $r_i > 0$, $i \in I$ (harmonic structures are defined in \cite[Section 3.1]{kigami2001}). Here \textit{regular} means that $r_i \in (0,1)$ for all $i$. Let $r_{\min} = \min_{i \in I} r_i$ and $r_{\max} = \max_{i \in I} r_i$. If $n \geq 0$, $w = w_1,\ldots w_n \in \Wbb$ then write $r_w := \prod_{i=1}^n r_{w_i}$. Let $d_H > 0$ be the unique real number such that
\begin{equation*}
\sum_{i \in I} r_i^{d_H} = 1.
\end{equation*}
Then let $\mu$ be the Borel regular probability measure on $F$ such that for any $n \geq 0$, if $w \in \Wbb_n$ then $\mu(F_w) = r_w^{d_H}$. In other words, $\mu$ is the self-similar measure on $F$ in the sense of \cite[Section 1.4]{kigami2001} associated with the weights $r_i^{d_H}$ on $I$. Let $(\Ecal,\Dcal)$ be the regular local Dirichlet form on $L^2(F,\mu)$ associated with this harmonic structure, as given by \cite[Theorem 3.4.6]{kigami2001}. This Dirichlet form is associated with a resistance metric $R$ on $F$, defined by
\[ R(x,y) = \left(\inf\{ \Ecal(f,f): f(x)=0, f(y)=1, f\in \Dcal\}\right)^{-1}, \]
which generates the original topology on $F$, by \cite[Theorem 3.3.4]{kigami2001}. Now let $2^{F^0} = \{ b: b \subseteq F^0 \}$ be the power set of $F^0$. Let $\Dcal_{F^0} = \Dcal$, and for proper subsets $b \in 2^{F^0}$ let
\begin{equation*}
\Dcal_b = \{ f\in \Dcal: f|_{F^0 \setminus b}=0 \}.
\end{equation*}
Then similarly to \cite[Corollary 3.4.7]{kigami2001}, $(\Ecal,\Dcal_b)$ is a regular local Dirichlet form on $L^2(F \setminus (F \setminus b),\mu)$. If $b = F^0$ then we may equivalently write $b=N$, and if $b = \emptyset$ then we may equivalently write $b=D$, see \cite{hambly2016}. The letters $N$ and $D$ indicate \textit{Neumann} and \textit{Dirichlet} boundary conditions respectively, and all other values of $b$ indicate a \textit{mixed} boundary condition. Intuitively, $b$ gives the subset of $F^0$ of points that are free to move under the influence of the SPDE, whereas the remaining elements of $F^0$ are fixed at the value 0.

Let $b \in 2^{F^0}$. By \cite[Chapter 4]{barlow1998}, associated with the Dirichlet form $(\Ecal,\Dcal_b)$ on $L^2(F,\mu)$ is a $\mu$-symmetric diffusion $X^b = (X^b_t)_{t \geq 0}$ on $F$ which itself is associated with a $C_0$-semigroup of contractions $S^b = (S^b_t)_{t \geq 0}$ on $L^2(F,\mu)$. Let $\Delta_b$ be the generator of this diffusion. If $b = N$ then $X^N$ has infinite lifetime, by \cite[Lemma 4.10]{barlow1998}. On the other hand, if $b$ is a proper subset of $F^0$ then the process $X^b$ has the law of a version of $X^N$ which is killed at the points $F^0 \setminus b$, by \cite[Section 4.4]{Fukushima2011}. Our notation is identical to that of \cite{hambly2018}.
\begin{exmp}\label{intervalSWE}
(\cite[Example 1.1]{hambly2016}) Let $F = [0,1]$ and take \textit{any} $M \geq 2$. For $1 \leq i \leq M$ let $\psi_i: F \to F$ be the affine map such that $\psi_i(0) = \frac{i-1}{M}$, $\psi_i(1) = \frac{i}{M}$. It follows that $F^0 = \{0,1\}$. Let $r_i = M^{-1}$ for all $i \in I$ and let
\begin{equation*}
A_0 = \left( \begin{array}{cc}
-1 & 1\\
1 & -1
\end{array}
\right).
\end{equation*}
Then all the conditions given above are satisfied. We have $\Dcal = H^1[0,1]$ and $\Ecal(f,g) = \int_0^1 f'g'$. The generators $\Delta_N$ and $\Delta_D$ are respectively the standard Neumann and Dirichlet Laplacians on $[0,1]$. The induced resistance metric $R$ is none other than the standard Euclidean metric.
\end{exmp}
Let $(\Omega, \Fcal, \Pbb)$ be a probability space. The \textit{damped stochastic wave equation} that we consider in the present paper is the SPDE (system) given by
\begin{equation}\label{SWE}
\begin{split}
du(t) &= \dot{u}(t)dt,\\
d\dot{u}(t) &= -2\beta \dot{u}(t)dt + \Delta_bu(t)dt + dW(t),\\
u(0) &= \dot{u}(0) = 0 \in L^2(F,\mu),
\end{split}
\end{equation}
where $\beta \geq 0$ the \textit{damping coefficient} and $b \in 2^{F^0}$ the \textit{boundary conditions} are parameters, and $W = (W(t))_{t \geq 0}$ is a $\Pbb$-cylindrical Wiener process on $L^2(F,\mu)$. That is, $W$ satisfies
\begin{equation*}
\Ebb \left[ \langle f,W(s) \rangle_{L^2(F,\mu)}\langle W(t),g \rangle_{L^2(F,\mu)} \right] = (s \wedge t) \langle f,g \rangle_{L^2(F,\mu)}
\end{equation*}
for all $s,t \in [0,\infty)$ and $f,g \in L^2(F,\mu)$. We would like the solution process $u = (u(t))_{t \geq 0}$ to be $L^2(F,\mu)$-valued, however it is not clear whether or not the same should be required of the first-derivative process $\dot{u} = (\dot{u}(t))_{t \geq 0}$. This will be clarified in the following section.

The main results of the present paper (Theorems \ref{SWEsoln}, \ref{ptregSWE} and \ref{SWEreg}) can be roughly paraphrased as follows:
\begin{thm}
Equip $F$ with its resistance metric $R$. The SPDE \eqref{SWE} has a unique solution which is a stochastic process $u = (u(t,x): (t,x) \in [0,\infty) \times F)$, which is almost surely jointly continuous in $[0,\infty) \times F$. For each $t \in [0,\infty)$, $u(t,\cdot)$ is almost surely essentially $\frac{1}{2}$-H\"{o}lder continuous in $(F,R)$. For each $x \in F$, $u(\cdot,x)$ is almost surely essentially $(1 - \frac{d_s}{2})$-H\"{o}lder continuous in the Euclidean metric, where $d_s \in [1,2)$ is the spectral dimension of $(F,R)$.
\end{thm}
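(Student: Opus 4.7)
The strategy is to realize the solution through the spectral decomposition of $L_b$, derive Gaussian increment estimates on it in space and time, and then invoke the Kolmogorov-type continuity theorem of \cite{hambly2016}. Since $F$ is compact and $(\Ecal,\Dcal_b)$ is a regular Dirichlet form with compact resolvent, $-L_b$ has pure point spectrum $0\leq\lambda_1\leq\lambda_2\leq\cdots$ with a complete $\Lcal^2(F,\mu)$-orthonormal eigenbasis $(\phi_k)_{k\geq 1}$, and the Weyl asymptotic $\lambda_k \asymp k^{2/d_s}$ is available for p.c.f.s.s. sets with regular harmonic structure. Projecting \eqref{SPDE} onto each $\phi_k$ decouples it into an infinite system of independent damped stochastic oscillators $d\dot{u}_k = -2\beta\dot{u}_k\,dt - \lambda_k u_k\,dt + dB_k(t)$, solvable by variation of constants as $u_k(t) = \int_0^t g_{\lambda_k,\beta}(t-s)\,dB_k(s)$ for an explicit deterministic Green's function $g_{\lambda_k,\beta}$. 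Setting $u(t)=\sum_k u_k(t)\phi_k$ and checking $\Lcal^2(F,\mu)$-convergence via the Weyl asymptotic will give an $\Lcal^2$-valued mild solution, unique by linearity (Theorem \ref{SPDEsoln}).

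The technical core is to establish an increment bound of the form
\begin{equation*}
\Ebb\bigl[|u(t,x) - u(s,y)|^2\bigr] \;\lesssim\; R(x,y) + |t-s|^{2-d_s},
\end{equation*}
since Gaussianity then automatically upgrades this to arbitrarily high even moments. For the temporal term I would compute $\Ebb[(u_k(t)-u_k(s))^2]$ directly from $g_{\lambda_k,\beta}$, obtaining a uniform bound $\min(C|t-s|^2,C'\lambda_k^{-1})$ that interpolates between the ``smooth'' small-$k$ behaviour and the ``stationary'' large-$k$ behaviour irrespective of whether a mode is under-, critically-, or over-damped; summing against $\lambda_k\asymp k^{2/d_s}$ and splitting at the crossover $k\sim|t-s|^{-d_s}$ yields the $|t-s|^{2-d_s}$ bound, finite precisely because $d_s<2$. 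For the spatial term, the resistance-form identity $|\phi_k(x)-\phi_k(y)|^2 \leq R(x,y)\Ecal(\phi_k,\phi_k) = R(x,y)\lambda_k$ together with the sub-Gaussian heat-kernel bound $\|\phi_k\|_\infty^2 \lesssim \lambda_k^{d_s/2}$ from \cite{barlow1998} should control $\sum_k \Ebb[u_k(t)^2](\phi_k(x)-\phi_k(y))^2$ linearly in $R(x,y)$; as a byproduct, uniform convergence of the partial sums gives the pointwise existence of $u(t,x)$ as a random variable (Section \ref{ptwise}). Feeding the joint bound into the fractal Kolmogorov continuity theorem of \cite{hambly2016} then yields jointly continuous modifications with the stated essential Hölder exponents $\tfrac{1}{2}$ in $(F,R)$ and $1-\tfrac{d_s}{2}$ in Euclidean time.

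The step I expect to be the main obstacle is the spatial increment bound. A naive use of only the resistance-form inequality produces $R(x,y)\sum_k\lambda_k\Ebb[u_k(t)^2]$, which diverges because $\Ebb[u_k(t)^2] = O(\lambda_k^{-1})$. One therefore must interpolate between the resistance-form bound $R(x,y)\lambda_k$ and the sup-norm eigenfunction bound $\lambda_k^{d_s/2}$ at an appropriate cutoff---equivalently, to exploit that the covariance kernel of $u(t,\cdot)$ inherits sub-Gaussian spatial decay from the semigroup $S^b$. This is exactly the point at which $d_s<2$ is used essentially, and it mirrors the ``curse of dimensionality'' alluded to in the introduction: on fractals with spectral dimension $\geq 2$ the same series would diverge for every $R(x,y)>0$ and no function-valued solution could exist.
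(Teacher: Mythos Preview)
Your overall architecture---spectral decomposition, Gaussian increment estimates, fractal Kolmogorov---matches the paper's. The gap is exactly where you flag it: the spatial increment bound. The interpolation you propose between the resistance inequality $(\phi_k(x)-\phi_k(y))^2\leq R(x,y)\lambda_k$ and the heat-kernel eigenfunction bound $\|\phi_k\|_\infty^2\lesssim\lambda_k^{d_s/2}$ does \emph{not} yield linear control in $R(x,y)$. Carrying it through, the $k$-th term of $\sum_k\Ebb[u_k(t)^2](\phi_k(x)-\phi_k(y))^2$ is $\lesssim\min\bigl(R(x,y),\lambda_k^{d_s/2-1}\bigr)$; splitting at the crossover and using $\lambda_k\asymp k^{2/d_s}$ gives at best $R(x,y)^{(2-2d_s)/(2-d_s)}$ for $d_s<1$, and the tail $\sum_{k>k^*}k^{1-2/d_s}$ diverges outright for $d_s\geq 1$. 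The same obstruction hits your pointwise temporal estimate: the bound $\sum_k\min(|t-s|^2,\lambda_k^{-1})$ that you compute is the $\Lcal^2$-increment $\Ebb\|u(t)-u(s)\|_\mu^2$, not $\Ebb|u(t,x)-u(s,x)|^2$; inserting the factor $\phi_k(x)^2\lesssim\lambda_k^{d_s/2}$ again spoils the exponent.

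The paper circumvents eigenfunction sup-norm bounds entirely by working with the resolvent density $\rho^b_1(x,y)=\sum_k(1+\lambda_k)^{-1}\phi_k(x)\phi_k(y)$. The crucial input is that $\rho^b_1$ is \emph{Lipschitz} in the resistance metric (Proposition~\ref{resolvLip}, building on \cite[Proposition 4.6]{hambly2016}), which is proven potential-theoretically via the reproducing-kernel property of $\rho^b_\lambda(x,\cdot)$ in $\Dcal^\lambda$, not via mode-by-mode estimates. This gives $\rho^b_1(x,x)-2\rho^b_1(x,y)+\rho^b_1(y,y)\lesssim R(x,y)$ directly, hence the spatial second-moment bound (Proposition~\ref{spaceestim}). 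The same resolvent machinery underlies the pointwise evaluation (Theorem~\ref{ptreg}) via the approximating functions $f_n^x$, and the pointwise temporal estimate (Proposition~\ref{timeestim}) is obtained by balancing the $f_n^x$-approximation error $\sim 2^{-n}$ against an operator-norm bound $\|V_\beta(-L_b,t+\cdot)-V_\beta(-L_b,\cdot)\|\lesssim t$ paired with $\|f_n^x\|_\mu^2\lesssim 2^{d_Hn}$, then optimising over $n$. Your parenthetical remark about the covariance kernel inheriting regularity from $S^b$ is pointing in the right direction, but the concrete mechanism is the Lipschitz resolvent, and that is the idea your argument is missing.
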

The precise meaning of \textit{essentially} is given in Section \ref{sec:regsol}. We see that the H\"{o}lder exponents given in the above theorem agree with the case ``$F = \Rbb$'' described in the introduction---there we have $d_s = 1$, and the solution is a rotation of a modified Brownian sheet so it has essential H\"{o}lder exponent $\frac{1}{2}$ in every direction. Of course $\Rbb$ is not compact so it doesn't exactly fit into our set-up, but we get a similar result by considering the interval $[0,1]$ instead, see Example \ref{intervalSWE}.
\begin{exmp}[Hata's tree-like set]
See \cite[Figure 1.4]{kigami2001} for a diagram. This p.c.f. fractal takes a parameter $c \in \Cbb$ such that $|c|, |1-c| \in (0,1)$, with $F^0 = \{ c,0,1 \}$, as described in \cite[Example 3.1.6]{kigami2001}. It has a collection of regular harmonic structures given by
\begin{equation*}
A_0 = \left(\begin{array}{ccc}
-h & h & 0\\
h & -(h+1) & 1\\
0 & 1 & -1\\
\end{array}\right)
\end{equation*}
with $\textbf{r} = (h^{-1},1-h^{-2})$ for $h \in (1,\infty)$, and these all fit into our set-up. In the introduction to \cite{walsh1986} the stochastic wave equation on the unit interval is said to describe the motion of a guitar string in a sandstorm (as long as we specify Dirichlet boundary conditions). Likewise, by taking $b = \{ c, 1 \}$ in our tree-like set, we are ``planting'' it at the point 0 so the associated stochastic wave equation approximately describes the motion of a tree in a sandstorm.
\end{exmp}
For more examples see \cite[Example 1.3]{hambly2016}.
\begin{rem}
The resistance metric $R$ is not a particularly intuitive metric on $F$. However, many fractals have a natural embedding in Euclidean space $\Rbb^n$, and subject to mild conditions on $F$ it can be shown that $R$ is equivalent to some positive power of the Euclidean metric, see \cite{hu2006}. An example is the $n$-dimensional Sierpinski gasket described in \cite[Example 1.3]{hambly2016} with $n \geq 2$. In \cite[Section 3]{hu2006} it is shown that there exists a constant $c > 0$ such that
\begin{equation*}
c^{-1}|x-y|^{d_w - d_f} \leq R(x,y) \leq c|x-y|^{d_w - d_f}
\end{equation*}
for all $x,y \in F \subseteq \Rbb^n$, where $d_w = \frac{\log(n+3)}{\log2}$ is the \textit{walk dimension} of the gasket and $d_f = \frac{\log(n+1)}{\log2}$ is its Euclidean Hausdorff dimension. It follows that the above theorem holds (with a different spatial H\"{o}lder exponent) if $R$ is replaced with a Euclidean metric on $F$. We observe that this means that there are function valued solutions to the stochastic wave equation for fractals with arbitrarily large Hausdorff dimension.
\end{rem}

\section{Existence and uniqueness of solution}\label{sec:existenceSWE}

In this section we will make explicit the meaning of a \textit{solution} to the SPDE \eqref{SWE}, and show that such a solution exists and is unique.
\begin{defn}
Henceforth let $\Hcal = L^2(F,\mu)$ and denote its inner product by $\langle \cdot,\cdot \rangle_\mu$. Moreover, for $\lambda > 0$ let $\Dcal^\lambda$ be the space $\Dcal$ equipped with the inner product
\begin{equation*}
\langle \cdot,\cdot \rangle_\lambda := \Ecal(\cdot,\cdot) + \lambda \langle \cdot,\cdot \rangle_\mu.
\end{equation*}
Since $(\Ecal,\Dcal)$ is closed, $\Dcal^\lambda$ is a Hilbert space.
\end{defn}
\begin{rem}
The space $\Dcal$ contains only $\frac{1}{2}$-H\"{o}lder continuous functions since by the definition of the resistance metric we have that
\begin{equation}
|f(x) - f(y)|^2 \leq R(x,y) \Ecal(f,f)
\end{equation}
for all $f \in \Dcal$ and all $x,y \in F$. Therefore, since $\Dcal_b$ is the intersection of the kernels of the continuous linear functionals $\{f \mapsto f(x):\ x \in F^0 \setminus b \}$, it is a closed subset of any $\Dcal^\lambda$ and has finite codimension $|F^0 \setminus b|$.
\end{rem}
\begin{defn}
The unique real $d_H > 0$ such that
\begin{equation*}
\sum_{i \in I} r_i^{d_H} = 1
\end{equation*}
is the \textit{Hausdorff dimension} of $(F,R)$, see \cite[Theorem 1.5.7]{kigami2001}.

The \textit{spectral dimension} of $(F,R)$ is given by
\begin{equation*}
d_s = \frac{2d_H}{d_H + 1},
\end{equation*}
see \cite[Theorem 4.1.5 and Theorem 4.2.1]{kigami2001}. Note by \cite[Remark 2.6]{hambly2016} that $d_H \in [1,\infty)$ and $d_s \in [1,2)$.
\end{defn}
If $A$ is a linear operator on $\Hcal$ then we denote the domain of $A$ by $\Dcal(A)$. If $A$ is bounded, then let $\Vert A \Vert$ be its operator norm. By \cite[Proposition 2.5]{hambly2018}, for each $b \in 2^{F^0}$ there exists an orthonormal basis $(\phi^b_k)_{k=1}^\infty$ of $\Hcal$, where the associated eigenvalues $(\lambda^b_k)_{k=1}^\infty$ are assumed to be in increasing order. In particular any element $f \in \Hcal$ has a series representation
\begin{equation*}
f = \sum_{k=1}^\infty f_k \phi^b_k
\end{equation*}
where $f_k = \langle \phi^b_k,f \rangle_\mu$. Then for any function $\Xi: \Rbb_+ \to \Rbb$, the map $\Xi(-\Delta_b)$ is a well-defined self-adjoint operator from $\Dcal(\Xi(-\Delta_b))$ to $\Hcal$ and has the representation
\begin{equation*}
\Xi(-\Delta_b)f = \sum_{k=1}^\infty f_k \Xi(\lambda^b_k) \phi^b_k,
\end{equation*}
where the domain $\Dcal(\Xi(-\Delta_b)))$ is the subspace of $\Hcal$ of exactly those $f$ for which the above expression is in $\Hcal$. In fact the operator $\Xi(-\Delta_b)$ is densely defined since $\phi^b_k \in \Dcal(\Xi(-\Delta_b)))$ for all $k$. This theory is known as the \textit{functional calculus} for linear operators, see \cite[Theorem VIII.5]{reed1981}.

In particular, if $\alpha \geq 0$ then $(1-\Delta_b)^{\frac{\alpha}{2}}$ is an invertible operator on $\Hcal$, and its inverse $(1-\Delta_b)^{-\frac{\alpha}{2}}$ is a bounded operator on $\Hcal$ which is a bijection from $\Hcal$ to $\Dcal((1-\Delta_b)^{\frac{\alpha}{2}})$.
\begin{defn}
Let $\alpha \geq 0 $ be a real number and $b \in 2^{F^0}$. The bounded operator $(1-\Delta_b)^{-\frac{\alpha}{2}}$ is called a \textit{Bessel potential operator}, see \cite{strichartz2003}, \cite{issoglio2015}. Let $\Hcal^{-\alpha}_b$ be the closure of $\Hcal$ with respect to the inner product given by
\begin{equation*}
(f,g) \mapsto \langle (1-\Delta_b)^{-\frac{\alpha}{2}}f,(1-\Delta_b)^{-\frac{\alpha}{2}}g \rangle_\mu.
\end{equation*}
$\Hcal^{-\alpha}_b$ is called a \textit{Sobolev space}, as in \cite{strichartz2003}.
\end{defn}
\begin{rem}
Recall that $\Dcal((1-\Delta_b)^{\frac{\alpha}{2}})$ is dense in $\Hcal$. It follows that the operator $(1-\Delta_b)^{\frac{\alpha}{2}}: \Dcal((1-\Delta_b)^{\frac{\alpha}{2}}) \to \Hcal$ extends to an isometric isomorphism from $\Hcal$ to $\Hcal^{-\alpha}_b$ characterised by
\begin{equation*}
(1-\Delta_b)^{\frac{\alpha}{2}} \left( \sum_{k=1}^\infty f_k \phi^b_k \right) = \sum_{k=1}^\infty (1+ \lambda^b_k)^{\frac{\alpha}{2}} f_k \phi^b_k.
\end{equation*}
It is easy to see that $\left( (1+ \lambda^b_k)^{\frac{\alpha}{2}} \phi^b_k \right)_{k=1}^\infty$ is a complete orthonormal basis of $\Hcal^{-\alpha}_b$. It follows that $\Hcal$ is dense in $\Hcal^{-\alpha}_b$.
\end{rem}

\subsection{Solution to the SPDE}

Let $\oplus$ denote direct sum of Hilbert spaces. Let $\alpha \geq 0$. The SPDE \eqref{SWE} can be recast as a first-order SPDE on the Hilbert space $\Hcal \oplus \Hcal^{-\alpha}_b$ given by
\begin{equation}\label{SWE2}
\begin{split}
dU(t) &= \Acal_{b,\beta} U(t) dt + d\Wcal(t),\\
U(0) &= 0 \in \Hcal \oplus \Hcal^{-\alpha}_b,
\end{split}
\end{equation}
where
\begin{equation*}
\Acal_{b,\beta} :=
\left(\begin{array}{cc}
0 & 1\\
\Delta_b & -2\beta\\
\end{array}\right)
\end{equation*}
is a densely defined operator on $\Hcal \oplus \Hcal^{-\alpha}_b$ with $\Dcal(\Acal_{b,\beta}) = \Dcal \left(\Delta_b^{(1-\frac{\alpha}{2}) \vee 0} \right) \oplus \Hcal$ and
\begin{equation*}
\Wcal :=
\left(\begin{array}{c}
0\\
W\\
\end{array}\right).
\end{equation*}
There is a precise definition of a solution to evolution equations of the form \eqref{SWE2} which is given in \cite[Chapter 5]{DaPrato1992}, so we can now finally define the notion of a solution to the second-order SPDE \eqref{SWE}. Note that it is still not clear what value of $\alpha$ should be picked.
\begin{defn}
Let $T \in (0,\infty]$. An $\Hcal$-valued predictable process $u = (u(t))_{t=0}^T$ is a \textit{solution} to the SPDE \eqref{SWE} if there exists $\alpha \geq 0$ and an $\Hcal^{-\alpha}_b$-valued predictable process $\dot{u} = (\dot{u}(t))_{t=0}^T$ such that
\begin{equation*}
U :=
\left(\begin{array}{c}
u\\
\dot{u}\\
\end{array}\right)
\end{equation*}
is an $\Hcal \oplus \Hcal^{-\alpha}_b$-valued weak solution to the SPDE \eqref{SWE2} in the sense of \cite[Chapter 5]{DaPrato1992}. If $T = \infty$, then it is a \textit{global} solution.
\end{defn}

Admittedly, the above definition is lacking as it is very abstract and unintuitive. In Theorem \ref{SWEsoln} we prove that solutions to \eqref{SWE} also satisfy a property which is analogous to the concept of weak solution as defined in \cite[Chapter 5]{DaPrato1992}, and is much more instructive.

\begin{defn}
For $\lambda \geq 0$ and $\beta \geq 0$, let $V_\beta(\lambda,\cdot): [0,\infty) \to \Rbb$ be the unique solution to the second-order ordinary differential equation
\begin{equation}
\begin{split}
\frac{d^2v}{dt^2} &+ 2\beta \frac{dv}{dt} + \lambda v = 0,\\
v(0) &= 0,\ \frac{dv}{dt}(0) = 1.
\end{split}
\end{equation}
Explicitly,
\begin{equation*}
V_\beta(\lambda,t) =
\begin{cases}\begin{array}{lr}
(\beta^2 - \lambda)^{-\frac{1}{2}}e^{-\beta t} \sinh \left((\beta^2 - \lambda)^{\frac{1}{2}}t \right) & \lambda < \beta^2,\\
t e^{-\beta t} & \lambda = \beta^2,\\
(\lambda - \beta^2)^{-\frac{1}{2}}e^{-\beta t} \sin \left((\lambda - \beta^2)^{\frac{1}{2}}t \right) & \lambda > \beta^2.\\
\end{array}
\end{cases}
\end{equation*}
For fixed $\lambda$ and $\beta$, this function is evidently smooth in $[0,\infty)$. Let $\dot{V}_\beta(\lambda,\cdot) = \frac{d V_\beta}{d t}(\lambda,\cdot)$.
\end{defn}

\begin{rem}
The different forms of $V_\beta$ correspond respectively to the motion of overdamped, critically damped and underdamped oscillators.
\end{rem}
\begin{lem}\label{lem:Scalconstr}
Let $\alpha = 1$. Then for each $\beta \geq 0$ and $b \in 2^{F^0}$, the operator $\Acal_{b,\beta}$ generates a quasicontraction semigroup $\Scal^{b,\beta} = (\Scal^{b,\beta}_t)_{t \geq 0}$ on $\Hcal \oplus \Hcal^{-1}_b$ such that $\Vert \Scal^{b,\beta}_t \Vert \leq e^{\frac{t}{2}}$ for all $t \geq 0$. Moreover, the right column of $\Scal^{b,\beta}_t$ is given by
\begin{equation*}
\Scal^{b,\beta}_t =
\left(\begin{array}{cc}
\cdot & V_\beta(-\Delta_b,t)\\
\cdot & \dot{V}_\beta(-\Delta_b,t)
\end{array}\right).
\end{equation*}
\end{lem}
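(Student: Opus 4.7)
The plan is to construct $\Scal^{b,\beta}_t$ explicitly via the functional calculus of $L_b$ and then verify the semigroup properties and norm bound mode-by-mode. Using the eigenbasis $(\phi^b_k)_{k=1}^\infty$ from Proposition \ref{spectra}, the space $\Hcal \oplus \Hcal^{-1}$ decomposes into an orthogonal sum of two-dimensional subspaces $V_k = \operatorname{span}\{(\phi^b_k,0),(0,\phi^b_k)\}$, and $\Acal_{b,\beta}$ preserves each $V_k$, restricting to the constant matrix
\begin{equation*}
M_k = \begin{pmatrix} 0 & 1 \\ -\lambda^b_k & -2\beta \end{pmatrix}.
\end{equation*}
By Definition \ref{Vbeta}, the second column of $e^{tM_k}$ is precisely $(V_\beta(\lambda^b_k, t),\, \dot{V}_\beta(\lambda^b_k, t))^T$. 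I would then define $\Scal^{b,\beta}_t$ on the algebraic direct sum $\bigoplus_k V_k$ mode-wise by $e^{tM_k}$ and extend by continuity, which immediately yields the claimed form of the right column via the functional calculus.

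The key step is the norm bound. Equip $\Hcal^{-1}$ with the $b$-dependent inner product from the Sobolev-space construction so that $\|(u,\dot u)\|^2 = \sum_k u_k^2 + \sum_k (1+\lambda^b_k)^{-1}\dot u_k^2$, where $u_k, \dot u_k$ are coefficients in the basis $(\phi^b_k)$. A direct computation for $(u,\dot u) \in \Dcal_b \oplus \Hcal$ gives
\begin{equation*}
\langle \Acal_{b,\beta}(u,\dot u), (u,\dot u) \rangle_{\Hcal \oplus \Hcal^{-1}} = \sum_k \frac{u_k \dot u_k}{1+\lambda^b_k} - 2\beta \sum_k \frac{\dot u_k^2}{1+\lambda^b_k},
\end{equation*}
where the cancellation comes from $1 - \lambda^b_k/(1+\lambda^b_k) = (1+\lambda^b_k)^{-1}$ combining $\langle \dot u,u\rangle_\mu$ with $\langle L_b u,\dot u\rangle_{\Hcal^{-1}}$. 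Applying AM--GM to the first sum and using $\beta \geq 0$ to discard the second yields $\langle \Acal_{b,\beta} U, U\rangle \leq \frac{1}{2}\|U\|^2$, so $\Acal_{b,\beta} - \frac{1}{2}I$ is dissipative. Differentiating $t \mapsto \|\Scal^{b,\beta}_t U\|^2$ mode-wise and applying Gronwall yields $\|\Scal^{b,\beta}_t U\| \leq e^{t/2}\|U\|$.

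It remains to check that $\Scal^{b,\beta}$ is a $C_0$-semigroup with generator $\Acal_{b,\beta}$. The semigroup law follows mode-wise from $e^{(s+t)M_k} = e^{sM_k}e^{tM_k}$ together with the uniform bound $\|e^{tM_k}\| \leq e^{t/2}$ allowing us to pass to the infinite sum. Strong continuity at $t=0$ follows from continuity of each $e^{tM_k}$ and dominated convergence, again using the uniform bound. Finally, on the dense set of finite linear combinations of the $(\phi^b_k,0)$ and $(0,\phi^b_k)$ the derivative at $t=0$ is computed mode-wise as $M_k$, which identifies $\Acal_{b,\beta}$ as the generator on a core; since $\Acal_{b,\beta}$ is closed on the domain $\Dcal_b \oplus \Hcal$ (being self-adjoint-like in each mode), this core determines the full generator.

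The main obstacle is the norm bound, specifically getting the cancellation clean enough that the ``free" term $(1+\lambda^b_k)^{-1}u_k\dot u_k$ can be absorbed into $\frac{1}{2}\|U\|^2$; this is precisely where the choice $\alpha = 1$ of Sobolev exponent pays off, because a different choice would leave an uncompensated factor of $(1+\lambda^b_k)^{\alpha/2 - 1}$ that blows up as $k \to \infty$. The computation is short but has to be done in exactly the right inner product, and the fact that the answer is $e^{t/2}$ rather than a contraction is due to the indefinite cross term $\langle \dot u, u\rangle$, which cannot be eliminated without introducing additional damping.
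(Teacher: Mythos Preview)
Your proposal is correct and arrives at the same conclusions via essentially the same dissipativity computation, but the organization differs from the paper's. The paper first shows $\Acal_{b,\beta} - \tfrac{1}{2}$ is dissipative (your same computation, done abstractly rather than in eigencoordinates), then checks a range condition by exhibiting $(\lambda - \Acal_{b,\beta})^{-1}$ explicitly, and invokes the Lumer--Phillips theorem to obtain existence of the semigroup together with the bound $\Vert \Scal^{b,\beta}_t\Vert \leq e^{t/2}$ in one stroke. Only afterwards does it identify the semigroup via the eigenbasis and the matrix exponentials $e^{tM_k}$. You reverse the order: construct the semigroup mode-wise first, then verify the norm bound (via the same dissipativity inequality, now read as a differential inequality for $\Vert e^{tM_k}v\Vert^2$ uniformly in $k$), strong continuity, and the generator. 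Your route trades the range-condition check for a direct verification of the $C_0$-semigroup axioms and a core argument for the generator; the latter is legitimate because the finite span of eigenmodes is invariant under the constructed semigroup and dense, hence automatically a core for its generator, though you should invoke that standard fact rather than the somewhat loose claim that $\Acal_{b,\beta}$ is ``self-adjoint-like in each mode''. Both approaches hinge on exactly the inner-product cancellation you identified, and your remark that the choice $\alpha = 1$ is what makes the cross term controllable is precisely the point.
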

\begin{proof}
Recall that
\begin{equation*}
\Acal_{b,\beta} =
\left(\begin{array}{cc}
0 & 1\\
\Delta_b & -2\beta\\
\end{array}\right).
\end{equation*}
If $f \in \Dcal(\Delta_b^\frac{1}{2})$, $g \in \Hcal$ then
\begin{equation*}
\begin{split}
&\left\langle \Acal_{b,\beta}
\left(\begin{array}{c}
f\\
g
\end{array}\right)
,
\left(\begin{array}{c}
f\\
g
\end{array}\right)
\right\rangle_{\Hcal \oplus \Hcal^{-1}_b}\\
&= \langle g,f \rangle_\mu + \langle \Delta_b(1-\Delta_b)^{-\frac{1}{2}} f, (1-\Delta_b)^{-\frac{1}{2}}g \rangle_\mu -2\beta \Vert (1-\Delta_b)^{-\frac{1}{2}}g \Vert_\mu^2\\
&= \langle (1-\Delta_b)(1-\Delta_b)^{-1} f,g \rangle_\mu + \langle \Delta_b(1-\Delta_b)^{-1} f, g \rangle_\mu -2\beta \Vert (1-\Delta_b)^{-\frac{1}{2}}g \Vert_\mu^2\\
&= \langle f,(1-\Delta_b)^{-1} g \rangle_\mu -2\beta \Vert (1-\Delta_b)^{-\frac{1}{2}}g \Vert_\mu^2\\
&\leq \frac{1}{2}\Vert f \Vert_\mu^2 + \frac{1}{2} \Vert (1-\Delta_b)^{-1} g \Vert_\mu^2 -2\beta \Vert (1-\Delta_b)^{-\frac{1}{2}}g \Vert_\mu^2
\end{split}
\end{equation*}
where in the last line we have used the Cauchy-Schwarz inequality. Now $\Vert (1-\Delta_b)^{-\frac{1}{2}} \Vert \leq 1$ by the functional calculus. It follows that
\begin{equation*}
\begin{split}
&\left\langle \left(\Acal_{b,\beta} - \frac{1}{2}\right)
\left(\begin{array}{c}
f\\
g
\end{array}\right)
,
\left(\begin{array}{c}
f\\
g
\end{array}\right)
\right\rangle_{\Hcal \oplus \Hcal^{-1}_b}\\
&\leq -\frac{1}{2}\left( \Vert (1-\Delta_b)^{-\frac{1}{2}} g \Vert_\mu^2 - \Vert (1-\Delta_b)^{-1} g \Vert_\mu^2 \right) -2\beta \Vert (1-\Delta_b)^{-\frac{1}{2}}g \Vert_\mu^2\\
&\leq 0,
\end{split}
\end{equation*}
which implies that the operator $\Acal_{b,\beta} - \frac{1}{2}$ is dissipative. Moreover, it can be easily checked that the operator $\lambda - \Acal_{b,\beta}$ is invertible for any $\lambda > 0$ with bounded inverse
\begin{equation*}
\left(\lambda - \Acal_{b,\beta} \right)^{-1}  =
\left(\begin{array}{cc}
2\beta + \lambda & 1\\
\Delta_b & \lambda\\
\end{array}\right) (\lambda(\lambda + 2\beta) - \Delta_b)^{-1}.
\end{equation*}
It follows by the Lumer--Phillips theorem for reflexive Banach spaces \cite[Corollary II.3.20]{Engel2001} that $\Acal_{b,\beta} - \frac{1}{2}$ generates a contraction semigroup on $\Hcal \oplus \Hcal^{-1}_b$. It follows that $\Acal_{b,\beta}$ generates a quasicontraction semigroup $\Scal^{b,\beta} = (\Scal^{b,\beta}_t)_{t \geq 0}$ on $\Hcal \oplus \Hcal^{-1}_b$ such that $\Vert \Scal^{b,\beta}_t \Vert \leq e^{\frac{t}{2}}$ for all $t \geq 0$.

To construct the semigroup $\Scal$, we first observe that $\Hcal \oplus \Hcal^{-1}_b$ has a complete orthonormal basis given by
\begin{equation*}
\left\{
\left(\begin{array}{c}
\phi^b_k\\
0
\end{array}\right): k \in \Nbb
\right\}
\cup
\left\{
\left(\begin{array}{c}
0\\
(1+\lambda^b_k)^\frac{1}{2}\phi^b_k
\end{array}\right): k \in \Nbb
\right\},
\end{equation*}
and that all of the elements of this basis are in $\Dcal(\Acal_{b,\beta})$. By a density argument, it suffices to compute how $\Acal_{b,\beta}$ affects the elements of this basis. For $k \geq 1$ we see that
\begin{equation*}
\begin{split}
\Acal_{b,\beta} \left(\begin{array}{c}
\phi^b_k\\
0
\end{array}\right) &=
\left(\begin{array}{cc}
0 & 1\\
-\lambda^b_k & -2\beta\\
\end{array}\right) \left(\begin{array}{c}
\phi^b_k\\
0
\end{array}\right),\\
\Acal_{b,\beta} \left(\begin{array}{c}
0\\
(1+\lambda^b_k)^\frac{1}{2}\phi^b_k
\end{array}\right) &=
\left(\begin{array}{cc}
0 & 1\\
-\lambda^b_k & -2\beta\\
\end{array}\right) \left(\begin{array}{c}
0\\
(1+\lambda^b_k)^\frac{1}{2}\phi^b_k
\end{array}\right).
\end{split}
\end{equation*}
Therefore to compute the semigroup $\Scal^{b,\beta}$ it will suffice to take a simple matrix exponential. We see that
\begin{equation*}
\exp\left[ \left(\begin{array}{cc}
0 & 1\\
-\lambda^b_k & -2\beta\\
\end{array}\right) t \right]
= \left(\begin{array}{cc}
\cdot & V_\beta(\lambda^b_k,t)\\
\cdot & \dot{V}_\beta(\lambda^b_k,t)\\
\end{array}\right),
\end{equation*}
where the left column of the matrix is not computed as it is not important. It follows that the semigroup generated by $\Acal_{b,\beta}$ takes the form
\begin{equation*}
\Scal^{b,\beta}_t =
\left(\begin{array}{cc}
\cdot & V_\beta(-\Delta_b,t)\\
\cdot & \dot{V}_\beta(-\Delta_b,t)
\end{array}\right).
\end{equation*}
\end{proof}
\begin{prop}\label{SWE2soln}
Let $\alpha = 1$. Then for each $\beta \geq 0$ and $b \in 2^{F^0}$ there is a unique global $\Hcal \oplus \Hcal^{-1}_b$-valued weak solution $U$ to the SPDE \eqref{SWE2} given by
\begin{equation*}
U(t) =
\left(\begin{array}{c}
\int_0^t V_\beta(-\Delta_b,t-s)dW(s)\\
\int_0^t \dot{V}_\beta(-\Delta_b,t-s)dW(s)
\end{array}\right).
\end{equation*}
In particular, it is a centred Gaussian process and has an $\Hcal \oplus \Hcal^{-1}_b$-continuous version.
\end{prop}
\begin{proof}
Following \cite[Section 5.1.2]{DaPrato1992}, we define the stochastic convolution
\begin{equation*}
W^b_\beta(t) := \int_0^t \Scal^{b,\beta}_{t-s} d\Wcal(t) = \int_0^t \Scal^{b,\beta}_{t-s} \iota_2 dW(t)
\end{equation*}
for $t \geq 0$, where $\iota_2: \Hcal \to \Hcal \oplus \Hcal^{-1}_b$ is the (bounded linear) map $f \mapsto \left(\begin{array}{c}
0\\
f
\end{array}\right)$. For $a \in [0,1)$ we wish to show that
\begin{equation*}
\int_0^T t^{-a} \left\Vert \Scal^{b,\beta}_t \iota_2 \right\Vert^2_{\HS(\Hcal \to \Hcal \oplus \Hcal^{-1}_b)} dt < \infty
\end{equation*}
for all $T > 0$, where $\Vert \cdot \Vert_{\HS(\Hcal \to \Hcal \oplus \Hcal^{-1}_b)}$ denotes the Hilbert-Schmidt norm of operators from $\Hcal$ to $\Hcal \oplus \Hcal^{-1}_b$. We have that
\begin{equation*}
\begin{split}
\int_0^T &t^{-a} \left\Vert \Scal^{b,\beta}_t \iota_2 \right\Vert^2_{\HS(\Hcal \to \Hcal \oplus \Hcal^{-1}_b)} dt = \int_0^T t^{-a} \sum_{k=1}^\infty \left\Vert \Scal^{b,\beta}_t \iota_2 \phi^b_k \right\Vert^2_{\Hcal \oplus \Hcal^{-1}_b} dt\\
&= \sum_{k=1}^\infty \int_0^T t^{-a}
\left\Vert \left(\begin{array}{c}
V_\beta(-\Delta_b,t)\phi^b_k\\
\dot{V}_\beta(-\Delta_b,t)\phi^b_k\\
\end{array}\right) \right\Vert^2_{\Hcal \oplus \Hcal^{-1}_b}
dt\\
&= \sum_{k=1}^\infty \int_0^T t^{-a} V_\beta(\lambda^b_k,t)^2 dt + \sum_{k=1}^\infty \int_0^T t^{-a} (1+ \lambda^b_k)^{-1}\dot{V}_\beta(\lambda^b_k,t)^2 dt,
\end{split}
\end{equation*}
and we treat the above two sums separately.

Now $t \mapsto t^{-a} V_\beta(\lambda^b_k,t)^2$ is always integrable in $[0,T]$ so the only thing that can go wrong is the sum. Since there are only finitely many $k$ such that $\lambda^b_k \leq \beta^2$, it suffices to consider the case $\lambda^b_k > \beta^2$. In this case we have that
\begin{equation*}
\begin{split}
\int_0^T t^{-a} V_\beta(\lambda^b_k,t)^2 dt &= (\lambda^b_k - \beta^2)^{-1} \int_0^T t^{-a} e^{-2\beta t} \sin^2 \left((\lambda^b_k - \beta^2)^{\frac{1}{2}}t \right) dt\\
&\leq (\lambda^b_k - \beta^2)^{-1} (1-a)^{-1} T^{1-a}.
\end{split}
\end{equation*}
It follows that
\begin{equation*}
\begin{split}
\sum_{k=1}^\infty \int_0^T t^{-a} V_\beta(\lambda^b_k,t)^2 dt &\leq \sum_{k: \lambda^b_k \leq \beta^2} \int_0^T t^{-a} V_\beta(\lambda^b_k,t)^2 dt + \frac{T^{1-a}}{1-a} \sum_{k: \lambda^b_k > \beta^2} (\lambda^b_k - \beta^2)^{-1}
\end{split}
\end{equation*}
which is finite by \cite[Proposition 2.5]{hambly2018}. We use a similar method for the $\dot{V}_\beta$ sum. Taking $a=0$, it thus follows from \cite[Theorem 5.4]{DaPrato1992} that the SPDE \eqref{SWE2} has a unique global solution $U = (U(t))_{t=0}^\infty$ in $\Hcal \oplus \Hcal^{-1}_b$ given by
\begin{equation*}
U(t) = W^b_\beta(t) = 
\left(\begin{array}{c}
\int_0^t V_\beta(-\Delta_b,t-s)dW(s)\\
\int_0^t \dot{V}_\beta(-\Delta_b,t-s)dW(s)
\end{array}\right).
\end{equation*}
It is a Gaussian process in $\Hcal \oplus \Hcal^{-1}_b$ by \cite[Theorem 5.2]{DaPrato1992}. As a stochastic integral of a cylindrical Wiener process, it is centred. Moreover, taking $a \in (0,1)$ we see that this $U$ has an $\Hcal \oplus \Hcal^{-1}_b$-continuous version by \cite[Theorem 5.11]{DaPrato1992}.
\end{proof}
\begin{thm}[Solution to \eqref{SWE}]\label{SWEsoln}
There exists a unique global solution $u$ to the SPDE \eqref{SWE}. It is a centred Gaussian process on $\Hcal$ given by
\begin{equation*}
u(t) = \int_0^t V_\beta(-\Delta_b,t-s)dW(s).
\end{equation*}
Moreover, $u$ is the unique $\Hcal$-valued process which satisfies the following ``weak solution'' property: For all $h \in \Dcal(\Delta_b)$, the function $t \mapsto \langle u(t), h \rangle_\mu$ satisfies $\langle u(0), h \rangle_\mu = 0$, is continuous in $[0,\infty)$, and is continuously differentiable in $(0,\infty)$ with
\begin{equation*}
\frac{d}{dt}\langle u(t), h \rangle_\mu = \int_0^t \langle u(s), \Delta_b h \rangle_\mu ds - 2\beta \langle u(t), h \rangle_\mu + \int_0^t \langle h, dW(s) \rangle_\mu.
\end{equation*}
\end{thm}
\begin{proof}
Existence is given directly by Proposition \ref{SWE2soln}, and yields the required centred Gaussian process $u$ as a solution which is continuous in $\Hcal$, with its associated $\dot{u}$ continuous in $\Hcal^{-1}_b$. Now note that our construction of $\Scal^{b,\beta}$ in Lemma \ref{lem:Scalconstr} was independent of the value of $\alpha$. That is, for any $\alpha \geq 0$ such that $\Acal_{b,\beta}$ generates a $C_0$-semigroup on $\Hcal \oplus \Hcal^{-\alpha}_b$, that semigroup must be $\Scal^{b,\beta}$. This means that the process $U$ constructed in Proposition \ref{SWE2soln} is independent of $\alpha$ and thus ensures uniqueness of $u$.

It can be checked directly that the adjoint of the operator $\Acal_{b,\beta}$ is given by
\begin{equation*}
\Acal_{b,\beta}^* =
\left(\begin{array}{cc}
0 & (1-\Delta_b)^{-1} \Delta_b\\
1-\Delta_b & -2\beta\\
\end{array}\right),
\end{equation*}
with domain $\Dcal(\Acal_{b,\beta}^*) = \Dcal(\Delta_b^\frac{1}{2}) \oplus \Hcal = \Dcal(\Acal_{b,\beta})$. By the definition of weak solution in \cite[Chapter 5]{DaPrato1992} for \eqref{SWE2} we see that for all $f \in \Dcal(\Delta_b^\frac{1}{2})$ and $g \in \Hcal$ and $t \in [0,\infty)$,
\begin{equation}\label{eqn:weakSWE}
\begin{split}
&\langle u(t), f \rangle_\mu + \langle \dot{u}(t), g \rangle_{\Hcal^{-1}_b}\\ 
&= \int_0^t\left( \langle u(s), (1-\Delta_b)^{-1} \Delta_b g \rangle_\mu + \langle \dot{u}(s), (1-\Delta_b)f - 2\beta g \rangle_{\Hcal^{-1}_b} \right) ds + \int_0^t \langle g, dW(s) \rangle_{\Hcal^{-1}_b}.
\end{split}
\end{equation}
Take $g=0$ and $f \in \Dcal(\Delta_b^\frac{1}{2})$ in \eqref{eqn:weakSWE}. Then by the fact that $\dot{u}$ is continuous in $\Hcal^{-1}_b$ and the fundamental theorem of calculus, the function $t \mapsto \langle u(t), f \rangle_\mu$ is continuously differentiable in $(0,\infty)$ with
\begin{equation*}
\frac{d}{dt}\langle u, f \rangle_\mu = \langle \dot{u}, (1-\Delta_b)f \rangle_{\Hcal^{-1}_b}.
\end{equation*}
Note in particular that the right-hand side of the above equation is equal to $\langle \dot{u},f \rangle_\mu$ if $\dot{u} \in \Hcal$. Now in \eqref{eqn:weakSWE} we take $f=0$ and let $g = (1-\Delta_b)h$ for some $h \in \Dcal(\Delta_b)$, which gives
\begin{equation*}
\begin{split}
&\langle \dot{u}(t), (1-\Delta_b)h \rangle_{\Hcal^{-1}_b}\\ 
&= \int_0^t\left( \langle u(s), \Delta_b h \rangle_\mu - 2\beta \langle \dot{u}(s), (1-\Delta_b)h \rangle_{\Hcal^{-1}_b} \right) ds + \int_0^t \langle (1-\Delta_b)h, dW(s) \rangle_{\Hcal^{-1}_b},
\end{split}
\end{equation*}
which is equivalent to
\begin{equation*}
\frac{d}{dt}\langle u(t), h \rangle_\mu = \int_0^t \langle u(s), \Delta_b h \rangle_\mu ds - 2\beta \langle u(t), h \rangle_\mu + \int_0^t \langle h, dW(s) \rangle_\mu.
\end{equation*}
Thus $u$ satisfies the required ``weak'' property. It remains to prove that $u$ uniquely satisfies this property among all $\Hcal$-valued processes. In order to do this let $\bar{u}$ be a process also satisfying the property and let $v = u-\bar{u}$. Let $v_k(t) = \langle v(t),\phi^b_k \rangle_\mu$ for $k \geq 1$, $t \in [0,\infty)$. Then $v_k$ can be seen to satisfy the ordinary differential equation
\begin{equation*}
\begin{split}
\frac{d^2v_k}{dt^2} &= -\lambda^b_k v_k - 2\beta \frac{dv_k}{dt},\\
v_k(0) &= \frac{dv_k}{dt}(0) = 0.
\end{split}
\end{equation*}
The unique solution to this ODE is $v_k = 0$ for every $k$, which implies $u = \bar{u}$.
\end{proof}
Now that we have our solution $u$ to \eqref{SWE} given by Theorem \ref{SWEsoln}, we show that it has a nice eigenfunction decomposition. Let $u_k = \langle \phi^b_k u \rangle_\mu$ for $k \geq 1$. We see that
\begin{equation*}
u_k(t) = \int_0^t V_\beta(\lambda^b_k,t-s) \langle \phi^b_k, dW(s) \rangle_\mu,
\end{equation*}
and it can be easily shown that $(\langle \phi^b_k, W \rangle_\mu)_{k=1}^\infty$ is a sequence of independent standard real Brownian motions.
\begin{defn}[Series representation of solution]
Let $\beta \geq 0$ and $b \in 2^{F^0}$. For $k \geq 0$ let $Y^{b,\beta}_k = (Y^{b,\beta}_k(t))_{t \geq 0}$ be the centred real-valued Gaussian process given by
\begin{equation*}
Y^{b,\beta}_k(t) = \int_0^t V_\beta(\lambda^b_k,t-s) \langle \phi^b_k, dW(s) \rangle_\mu.
\end{equation*}
The family $(Y^{b,\beta}_k)_{k=1}^\infty$ is clearly independent, and if $u$ is the solution to \eqref{SWE} for the given values of $\beta$ and $b$, then
\begin{equation}\label{seriesrep}
u(t) = \sum_{k=1}^\infty Y^{b,\beta}_k(t) \phi^b_k.
\end{equation}
\end{defn}
\begin{rem}
By Theorem \ref{SWEsoln}, the real-valued process $Y^{b,\beta}_k$ satisfies the following stochastic integro-differential equation:
\begin{equation*}
\begin{split}
y'(t) &= - 2\beta y(t) - \lambda^b_k \int_0^t y(s)ds + \int_0^t \langle \phi^b_k, dW(s) \rangle_\mu,\\
y(0) &= 0,
\end{split}
\end{equation*}
and it is easily shown to be the unique solution.
\end{rem}
\begin{rem}[Non-zero initial conditions]
For a moment we consider the SPDE
\begin{equation}\label{nonzeroSWE}
\begin{split}
du(t) &= \dot{u}(t)dt,\\
d\dot{u}(t) &= -2\beta \dot{u}(t)dt + \Delta_bu(t)dt + dW(t),\\
u(0) &= f,\ \dot{u}(0) = g.
\end{split}
\end{equation}
This is simply the SPDE \eqref{SWE} with possibly non-zero initial conditions. We can characterise the solutions of this SPDE using the deterministic damped wave equation
\begin{equation}\label{wavePDE}
\begin{split}
du(t) &= \dot{u}(t)dt,\\
d\dot{u}(t) &= -2\beta \dot{u}(t)dt + \Delta_bu(t)dt,\\
u(0) &= f,\ \dot{u}(0) = g,
\end{split}
\end{equation}
which is studied in \cite{dalrymple1999} and \cite{hu2002} in the case $\beta = 0$. Let $u$ be the unique solution to \eqref{SWE} given in Theorem \ref{SWEsoln}. Then it is clear that a process $\tilde{u}$ solves \eqref{nonzeroSWE} if and only if $\tilde{u} - u$ solves \eqref{wavePDE}. Thus understanding the stochastic wave equation with general initial conditions on a fractal is equivalent to understanding the deterministic wave equation on that fractal.
\end{rem}

\section{Regularity of solution}\label{sec:regsol}

\subsection{$L^2$-H\"{o}lder continuity}
The first regularity property of the solution $u = (u(t))_{t=0}^\infty$ to \eqref{SWE} that we will consider is H\"{o}lder continuity in $\Hcal$, when $u$ is interpreted as a function $u:\Omega \times [0,\infty) \to \Hcal$.
\begin{prop}\label{l2estimSWE}
Let $u: \Omega \times [0,\infty) \to \Hcal$ be the solution to the SPDE \eqref{SWE}. For every $T > 0$ there exists a constant $C > 0$ such that
\begin{equation*}
\Ebb\left[ \left\Vert u(s) - u(s+t) \right\Vert^2_\mu\right] \leq Ct^{2 - d_s}
\end{equation*}
for all $s,t \geq 0$ such that $s,s+t \in [0,T]$.
\end{prop}
\begin{proof}
By It\={o}'s isometry for Hilbert spaces,
\begin{equation*}
\begin{split}
\Ebb &\left[ \left\Vert u(s) - u(s+t) \right\Vert^2_\mu\right]\\
&= \Ebb\left[ \left\Vert \int_0^{s+t} \left( V_\beta(-\Delta_b,s+t-t') - V_\beta(-\Delta_b,s-t') \1bb_{\{t' \leq s\}} \right)dW(t') \right\Vert^2_\mu\right]\\
&= \int_0^{s+t} \left\Vert V_\beta(-\Delta_b,s+t-t') - V_\beta(-\Delta_b,s-t') \1bb_{\{t' \leq s\}} \right\Vert^2_{\HS(\Hcal)} dt',
\end{split}
\end{equation*}
where $\Vert \cdot \Vert_{\HS(\Hcal)}$ denotes the Hilbert-Schmidt norm for operators from $\Hcal$ to itself. It follows that
\begin{equation}\label{l2Holdereqn}
\begin{split}
\Ebb &\left[ \left\Vert u(s) - u(s+t) \right\Vert^2_\mu\right]\\
&= \int_0^s \left\Vert V_\beta(-\Delta_b,t+t') - V_\beta(-\Delta_b,t') \right\Vert^2_{\HS(\Hcal)} dt' + \int_0^t \left\Vert V_\beta(-\Delta_b,t') \right\Vert^2_{\HS(\Hcal)} dt'\\
&= \sum_{k=0}^\infty \int_0^s \left( V_\beta(\lambda^b_k,t+t') - V_\beta(\lambda^b_k,t') \right)^2 dt' + \sum_{k=0}^\infty \int_0^t \left( V_\beta(\lambda^b_k,t') \right)^2 dt'
\end{split}
\end{equation}
and we treat each of the above two sums separately. Notice that by \cite[Proposition 2.5]{hambly2018} there are only finitely many $k$ such that $\lambda^b_k \leq \beta^2$.

We consider the first sum of \eqref{l2Holdereqn}, and we first look at the case $\lambda^b_k > \beta^2$. Then using standard facts about the Lipschitz coefficients of the functions $\exp$ and $\sin$ in $[0,T]$ we see that
\begin{equation*}
\begin{split}
&\int_0^s \left( V_\beta(\lambda^b_k,t+t') - V_\beta(\lambda^b_k,t') \right)^2 dt'\\
&= (\lambda^b_k - \beta^2)^{-1} \int_0^s \left( e^{-\beta (t+t')} \sin \left((\lambda^b_k - \beta^2)^\frac{1}{2}(t+t') \right) - e^{-\beta t'} \sin \left((\lambda^b_k - \beta^2)^\frac{1}{2}t' \right) \right)^2 dt'\\
&\leq (\lambda^b_k - \beta^2)^{-1} \int_0^s \left( (\beta+(\lambda^b_k - \beta^2)^{\frac{1}{2}})t \wedge 2 \right)^2 dt'\\
&\leq 4T\frac{\lambda^b_k t^2 \wedge 1}{\lambda^b_k - \beta^2}.
\end{split}
\end{equation*}
We get a similar result in the case $\lambda^b_k \leq \beta^2$, that is, a term of order $O(t^2)$. In this case the dependence of this term on $k$ is unimportant as there are only finitely many $k$ such that $\lambda^b_k \leq \beta^2$. There therefore exists a constant $C' > 0$ such that
\begin{equation*}
\sum_{k=0}^\infty \int_0^s \left( V_\beta(\lambda^b_k,t+t') - V_\beta(\lambda^b_k,t') \right)^2 dt' \leq C't^2 + 4T\sum_{k: \lambda^b_k > \beta^2} \frac{\lambda^b_k t^2 \wedge 1}{\lambda^b_k - \beta^2}.
\end{equation*}
Using \cite[Proposition 2.5]{hambly2018}, there therefore exists $C'' > 0$ such that
\begin{equation*}
\sum_{k=0}^\infty \int_0^s \left( V_\beta(\lambda^b_k,t+t') - V_\beta(\lambda^b_k,t') \right)^2 dt' \leq C''\left(t^2 + \sum_{k=1}^\infty k^{-\frac{2}{d_s}} \wedge t^2 \right).
\end{equation*}
Then by \cite[Lemma 5.2]{hambly2016}, there exists a $C''' > 0$ such that
\begin{equation*}
\sum_{k=0}^\infty \int_0^s \left( V_\beta(\lambda^b_k,t+t') - V_\beta(\lambda^b_k,t') \right)^2 dt' \leq C''' t^{2 - d_s}.
\end{equation*}

Now for the second sum of \eqref{l2Holdereqn}, again we first look at the case $\lambda^b_k > \beta^2$. Using Lipschitz coefficents and the fact that $V_\beta(\lambda^b_k,0) = 0$ we have that
\begin{equation*}
\begin{split}
\int_0^t \left( V_\beta(\lambda^b_k,t') \right)^2 dr &= (\lambda - \beta^2)^{-1}\int_0^t e^{-2\beta t'} \sin^2 \left((\lambda - \beta^2)^{\frac{1}{2}}t' \right)dt'\\
&\leq (\lambda^b_k - \beta^2)^{-1} \int_0^t \left( (\beta+(\lambda^b_k - \beta^2)^{\frac{1}{2}})t' \wedge 1 \right)^2 dt'\\
&\leq 4(\lambda^b_k - \beta^2)^{-1} \int_0^t \left( \lambda^b_k (t')^2 \wedge 1 \right) dt'\\
&\leq 4t \frac{\lambda^b_k t^2 \wedge 1}{\lambda^b_k - \beta^2}.
\end{split}
\end{equation*}
In the case $\lambda^b_k \leq \beta^2$ we get as usual a similar result, of order $O(t^3)$, and its dependence on $k$ is unimportant as there are only finitely many. Using the same method as for the first sum of \eqref{l2Holdereqn} we see that there exists a $C'''' > 0$ such that
\begin{equation*}
\sum_{k=0}^\infty \int_0^t \left( V_\beta(\lambda^b_k,t') \right)^2 dt' \leq C'''' t^{3 - d_s}.
\end{equation*}
Plugging the estimates into \eqref{l2Holdereqn} finishes the proof.
\end{proof}
\begin{defn}
Let $(M_1,d_1)$ and $(M_2,d_2)$ be metric spaces and let $\delta \in (0,1]$. A function $f: M_1 \to M_2$ is \textit{essentially $\delta$-H\"{o}lder continuous} if for each $\gamma \in (0,\delta)$ there exists $C_\gamma > 0$ such that
\begin{equation*}
d_2(f(x),f(y)) \leq C_\gamma d_1(x,y)^\gamma
\end{equation*}
for all $x,y \in M_1$.
\end{defn}
\begin{thm}[$L^2$-H\"{o}lder continuity]\label{thm:L2verSWE}
Let $u: \Omega \times [0,\infty) \to \Hcal$ be the solution to the SPDE \eqref{SWE}. Then there exists a version $\tilde{u}$ of $u$ such that the following holds: for all $T > 0$, the restriction of $\tilde{u}$ to $\Omega \times [0,T]$ is almost surely essentially $(1 - \frac{d_s}{2})$-H\"{o}lder continuous as a function from $[0,T]$ to $\Hcal$.
\end{thm}
\begin{proof}
Fix $T > 0$. This is a simple application of Kolmogorov's continuity theorem. It is a consequence of Fernique's theorem \cite[Theorem 2.7]{DaPrato1992} that for each $p \in \Nbb$ there exists a constant $K_p > 0$ such that if $Z$ is a Gaussian random variable on some separable Banach space $B$ then
\begin{equation*}
\Ebb\left[\left\Vert Z \right\Vert^{2p}_B\right] \leq K_p\Ebb\left[\left\Vert Z \right\Vert^2_B\right]^p,
\end{equation*}
see also \cite[Proposition 3.14]{Hairer2016}. Since $u$ is a Gaussian process, Proposition \ref{l2estimSWE} gives us that
\begin{equation*}
\Ebb\left[ \left\Vert u(s) - u(t) \right\Vert^{2p}_\mu\right] \leq K_p C^p |s - t|^{p(2 - d_s)}
\end{equation*}
for all $s,t \in [0,T]$, for all $p \in \Nbb$. Then by taking $p$ arbitrarily large and using Kolmogorov's continuity theorem, the result follows. Note that any two continuous versions of $u$ must be indistinguishable, which allows us to extend the construction of $\tilde{u}$ on any given finite time interval $[0,T]$ to the whole interval $[0,\infty)$.
\end{proof}

\subsection{Pointwise regularity}

Let $u$ be the solution to \eqref{SWE} in Theorem \ref{SWEsoln}. Henceforth we assume that $u$ is the $\Hcal$-continuous version constructed in Theorem \ref{thm:L2verSWE}. We currently have $u$ as an $\Hcal$-valued process, so in this section we will show that the ``point evaluations'' $u(t,x)$ for $(t,x) \in [0,\infty) \times F$ can be defined in such a way that they make sense as real-valued random variables. This will allow us to interpret $u$ as a function from $\Omega \times [0,\infty) \times F$ to $\Rbb$, and is necessary for us to be able to talk about continuity of $u$ in space and time.
\begin{defn}
For $\lambda > 0$ and $b \in 2^{F^0}$ let $\rho^b_\lambda: F \times F \to \Rbb$ be the \textit{resolvent density} associated with $\Delta_b$, exactly as in \cite[Section 3.1]{hambly2018}.
\end{defn}
\begin{lem}\label{Vint}
Let $\beta \geq 0$ and $\lambda \geq 0$. If $\alpha > 0$ then
\begin{equation*}
\int_0^\infty e^{-2\alpha t} V_\beta(\lambda,t)^2dt = \frac{1}{4(\alpha + \beta)(\alpha^2 + 2\alpha\beta + \lambda)}
\end{equation*}
\end{lem}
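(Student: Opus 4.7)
The plan is to do a direct computation, using the unified representation of $V_\beta(\lambda,t)$ in terms of the characteristic roots of the ODE \eqref{realIVP}. Let $\mu_\pm = -\beta \pm \sqrt{\beta^2 - \lambda}$, where the square root is taken to be purely imaginary in the underdamped case $\lambda > \beta^2$. Then whenever $\lambda \neq \beta^2$, the function
\[
V_\beta(\lambda,t) = \frac{e^{\mu_+ t} - e^{\mu_- t}}{\mu_+ - \mu_-}
\]
reproduces all three explicit forms in Definition \ref{Vbeta}: in the overdamped case the roots are real and the hyperbolic sine appears, in the underdamped case they are complex conjugates and Euler's identity gives the sine. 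The critically damped case $\lambda = \beta^2$ will be handled by continuity (both sides of the claimed identity are continuous in $\lambda$).

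Next I would square to get
\[
V_\beta(\lambda,t)^2 = \frac{e^{2\mu_+ t} - 2 e^{(\mu_+ + \mu_-)t} + e^{2\mu_- t}}{(\mu_+ - \mu_-)^2},
\]
and integrate termwise against $e^{-2\alpha t}$. Convergence at infinity is justified by the fact that $\mathrm{Re}(\mu_\pm) \leq 0$ while $\alpha > 0$, so each exponential in the numerator decays at rate at least $2\alpha$. A straightforward evaluation yields
\[
\int_0^\infty e^{-2\alpha t} V_\beta(\lambda,t)^2\, dt = \frac{1}{(\mu_+ - \mu_-)^2}\left[ \frac{1}{2(\alpha - \mu_+)} - \frac{2}{2\alpha - \mu_+ - \mu_-} + \frac{1}{2(\alpha - \mu_-)} \right].
\]

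Then I apply Vieta's formulas: $\mu_+ + \mu_- = -2\beta$, $\mu_+ \mu_- = \lambda$, and $(\mu_+ - \mu_-)^2 = 4(\beta^2 - \lambda)$. After combining fractions the bracketed expression becomes
\[
\frac{1}{2}\left[\frac{2(\alpha + \beta)}{\alpha^2 + 2\alpha\beta + \lambda} - \frac{2}{\alpha + \beta}\right] = \frac{(\alpha+\beta)^2 - (\alpha^2 + 2\alpha\beta + \lambda)}{(\alpha+\beta)(\alpha^2 + 2\alpha\beta + \lambda)} = \frac{\beta^2 - \lambda}{(\alpha+\beta)(\alpha^2 + 2\alpha\beta + \lambda)},
\]
and dividing by $4(\beta^2 - \lambda)$ produces exactly the claimed $\tfrac{1}{4(\alpha+\beta)(\alpha^2 + 2\alpha\beta + \lambda)}$. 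The critically damped case follows by letting $\lambda \to \beta^2$ and invoking dominated convergence on the left (with $e^{-2\alpha t} (T e^{-\beta t})^2$ as a dominating function for $t \leq T$ and an exponential tail for $t > T$) and continuity of a rational function on the right.

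There is no real obstacle here; the only thing to be careful about is treating the three cases uniformly, which the characteristic-root parametrisation handles cleanly, and verifying that the critically damped case is recovered as a limit rather than requiring a separate calculation. If one prefers, the $\lambda = \beta^2$ case can instead be verified directly from $\int_0^\infty t^2 e^{-2(\alpha+\beta)t}\,dt = \tfrac{1}{4(\alpha+\beta)^3}$, which matches the right-hand side with $\lambda = \beta^2$.
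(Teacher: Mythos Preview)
Your proof is correct. The paper's own proof is a one-line ``easily checked in each of the cases $\lambda < \beta^2$, $\lambda = \beta^2$ and $\lambda > \beta^2$,'' so both arguments amount to a direct computation; you simply unify the overdamped and underdamped cases via the characteristic-root parametrisation rather than treating them separately, which is a mild streamlining but not a genuinely different route.
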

\begin{proof}
Can be computed explicitly using (complex) integration in each of the cases $\lambda < \beta^2$, $\lambda = \beta^2$ and $\lambda > \beta^2$ using the definition of $V_\beta$.
\end{proof}
\begin{lem}\label{resolvestimSWE}
Let $u: [0,\infty) \to \Hcal$ be the solution to the SPDE \eqref{SWE}. If $g \in \Hcal$ and $t \in [0,\infty)$ then
\begin{equation*}
\Ebb \left[ \langle u(t),g \rangle_\mu^2 \right] \leq \frac{e^{2(\sqrt{\beta^2 + 1} - \beta)t}}{4\sqrt{\beta^2 + 1}} \int_F \int_F \rho^b_1(x,y)g(x)g(y)\mu(dx)\mu(dy).
\end{equation*}
\end{lem}
\begin{proof}
Let $g^* \in \Hcal^*$ be the bounded linear functional $f \mapsto \langle f,g \rangle_\mu$. We see by It\={o}'s isometry that
\begin{equation*}
\begin{split}
\Ebb \left[ \langle u(t),g \rangle_\mu^2 \right] &= \Ebb \left[ g^*(u(t))^2 \right]\\
&= \int_0^t \Vert g^* V_\beta(-\Delta_b,s) \Vert_{\HS}^2 ds\\
&= \int_0^t \Vert V_\beta(-\Delta_b,s) g \Vert_\mu^2 ds\\
\end{split}
\end{equation*}
where the last equality is a result of the self-adjointness of the operator $V_\beta(-\Delta_b,s)$. If we let $g_k = \langle \phi^b_k, g \rangle_\mu$ for $k \geq 1$ then for any $\alpha > 0$ we have that
\begin{equation*}
\begin{split}
\Ebb \left[ \langle u(t),g \rangle_\mu^2 \right] &= \sum_{k=1}^\infty g_k^2 \int_0^t V_\beta(\lambda^b_k,s)^2 ds\\
&\leq e^{2\alpha t} \sum_{k=1}^\infty g_k^2 \int_0^\infty e^{-2\alpha s} V_\beta(\lambda^b_k,s)^2 ds\\
&= e^{2\alpha t} \sum_{k=1}^\infty g_k^2 \frac{1}{4(\alpha + \beta)(\alpha^2 + 2\alpha\beta + \lambda^b_k)}\\
&= \frac{e^{2\alpha t}}{4(\alpha + \beta)} \left\langle (\alpha^2 + 2\alpha\beta - \Delta_b)^{-1}g , g \right\rangle_\mu\\
&= \frac{e^{2\alpha t}}{4(\alpha + \beta)} \int_F \int_F \rho^b_{\alpha^2+2\alpha\beta}(x,y)g(x)g(y)\mu(dx)\mu(dy),
\end{split}
\end{equation*}
where we have used Lemma \ref{Vint}. Finally we pick $\alpha = \sqrt{\beta^2 + 1} - \beta$ so that $\alpha^2 + 2\alpha\beta = 1$ and the proof is complete.
\end{proof}

For $x \in F$ and $\epsilon > 0$ let $B(x,\epsilon)$ be the closed $R$-ball in $F$ with centre $x$ and radius $\epsilon$.
\begin{lem}[Neighbourhoods]\label{nhoodestimSWE}
There exists a constant $c_5 > 0$ such that the following holds: If $x \in F$ and $n \geq 0$ then there exists a subset $D_n^0(x) \subseteq F$ such that $\mu(D_n^0(x)) > r_{\min}^{d_H}2^{-d_Hn}$ and
\begin{equation*}
x \in D^0_n(x) \subseteq B(x, c_5 2^{-n}).
\end{equation*}
\end{lem}
\begin{proof}
The $D^0_n(x)$ we need is the $n$-neighbourhood of $x$ and is defined in \cite[Definition 3.10]{hambly2016}. The result $D^0_n(x) 
\subseteq B(x, c_5 2^{-n})$ then follows from \cite[Proposition 3.12]{hambly2016}. The result on $\mu(D_n^0(x))$ is due to the fact 
that by definition, $F_w \subseteq D_n^0(x)$ for some $w \in \Wbb_*$ such that $r_w > r_{\min} 2^{-n}$.
\end{proof}
\begin{defn}
For $x \in F$ and $n \geq 0$, define
\begin{equation*}
f^x_n = \mu(D^0_n(x))^{-1} \1bb_{D^0_n(x)}.
\end{equation*}
Evidently $f^x_n \in \Hcal$, $\Vert f^x_n \Vert_\mu^2 = \mu(D^0_n(x))^{-1} < r_{\min}^{-d_H} 2^{d_Hn}$ by the above Lemma and if $g \in \Hcal$ is continuous then
\begin{equation*}
\lim_{n \to \infty}\langle f^x_n,g \rangle_\mu = g(x),
\end{equation*}
by the above lemma.
\end{defn}
We can now state and prove the main theorem of this section, for a similar result for the stochastic heat equation see \cite[Theorem 4.8]{hambly2016}.

\begin{thm}[Pointwise regularity]\label{ptregSWE}
Let $u: [0,\infty) \to \Hcal$ be the solution to the SPDE \eqref{SWE}. Then for all $(t,x) \in [0,\infty) \times F$ the expression
\begin{equation*}
u(t,x) := \sum_{k=1}^\infty Y^{b,\beta}_k(t) \phi^b_k(x)
\end{equation*}
is a well-defined real-valued centred Gaussian random variable. There exists a constant $c_6 > 0$ such that for all $x \in F$, $t \in [0,\infty)$ and $n \geq 0$ we have that
\begin{equation*}
\Ebb \left[ \left( \langle u(t), f^x_n \rangle_\mu - u(t,x) \right)^2 \right] \leq c_6e^{2(\sqrt{\beta^2 + 1} - \beta)t} 2^{-n}.
\end{equation*}
\end{thm}

\begin{proof}
Note that $\phi^b_k \in \Dcal(\Delta_b)$ for each $k$, so $\phi^b_k$ is continuous and so $\phi^b_k(x)$ is well-defined. By the definition of $u(t,x)$ as a sum of real-valued centred Gaussian random variables we need only prove that it is square-integrable and that the approximation estimate holds. Let $x \in F$. The theorem is trivial for $t = 0$ so let $t \in (0,\infty)$. By Lemma \ref{resolvestimSWE} we have that
\begin{equation*}
\begin{split}
\Ebb &\left[ \langle u(t),f^x_n - f^x_m \rangle_\mu^2 \right]\\
&\leq \frac{e^{2(\sqrt{\beta^2 + 1} - \beta)t}}{4\sqrt{\beta^2 + 1}} \int_F \int_F \rho^b_1(z_1,z_2)(f^x_n(z_1)-f^x_m(z_1))(f^x_n(z_2)-f^x_m(z_2))\mu(dz_1)\mu(dz_2).
\end{split}
\end{equation*}
Then using the definition of $f^x_n$, \cite[Proposition 3.2]{hambly2018} and Lemma \ref{nhoodestimSWE} we have that
\begin{equation}\label{cauchyseqSWE}
\begin{split}
\Ebb \left[ \langle u(t),f^x_n - f^x_m \rangle_\mu^2 \right] &\leq \frac{e^{2(\sqrt{\beta^2 + 1} - \beta)t}}{4\sqrt{\beta^2 + 1}} \left( 8c_52^{-n} + 8c_52^{-m} \right)\\
&= \frac{2c_5 e^{2(\sqrt{\beta^2 + 1} - \beta)t}}{\sqrt{\beta^2 + 1}} \left( 2^{-n} + 2^{-m} \right).
\end{split}
\end{equation}
Writing $u$ in its series representation \eqref{seriesrep} and using the independence of the $Y^{b,\beta}_k$, it follows that
\begin{equation*}
\sum_{k=1}^\infty \Ebb \left[Y^{b,\beta}_k(t)^2 \right] \left( \langle \phi^b_k, f^x_n\rangle_\mu - \langle \phi^b_k,f^x_m \rangle_\mu \right)^2 \leq \frac{2c_5 e^{2(\sqrt{\beta^2 + 1} - \beta)t}}{\sqrt{\beta^2 + 1}} \left( 2^{-n} + 2^{-m} \right).
\end{equation*}
Thus the left-hand side of the above equation tends to zero as $m,n \to \infty$. The solution $u$ is an $\Hcal$-valued Gaussian process so we know that
\begin{equation*}
\sum_{k=1}^\infty \Ebb \left[Y^{b,\beta}_k(t)^2 \right] \langle \phi^b_k, f^x_n\rangle_\mu^2 = \Ebb \left[ \langle u(t),f^x_n \rangle_\mu^2 \right] < \infty
\end{equation*}
for all $x \in F$, $n \geq 0$ and $t \in [0,\infty)$, therefore by the completeness of the sequence space $\ell^2$ there must exist a unique sequence $(y_k)_{k=1}^\infty$ such that $\sum_{k=1}^\infty y_k^2 < \infty$ and
\begin{equation*}
\lim_{n \to \infty} \sum_{k=1}^\infty \left( \Ebb \left[Y^{b,\beta}_k(t)^2 \right]^\frac{1}{2} \langle \phi^b_k, f^x_n\rangle_\mu - y_k \right)^2 = 0.
\end{equation*}
Since $\phi^b_k$ is continuous we have $\lim_{n \to \infty}\langle \phi^b_k, f^x_n\rangle_\mu = \phi^b_k(x)$. Thus by Fatou's lemma we can identify the sequence $(y_k)$; we must have
\begin{equation*}
\sum_{k=1}^\infty \Ebb \left[Y^{b,\beta}_k(t)^2 \right] \phi^b_k(x)^2 < \infty
\end{equation*}
and
\begin{equation*}
\lim_{n \to \infty}\sum_{k=1}^\infty \Ebb \left[Y^{b,\beta}_k(t)^2 \right] \left( \langle \phi^b_k, f^x_n\rangle_\mu - \phi^b_k(x) \right)^2 = 0.
\end{equation*}
Equivalently by \eqref{seriesrep},
\begin{equation*}
\Ebb \left[ u(t,x)^2 \right] < \infty
\end{equation*}
(so we have proven square-integrability) and
\begin{equation*}
\lim_{n \to \infty}\Ebb \left[ \left( \langle u(t), f^x_n \rangle_\mu - u(t,x) \right)^2 \right] = 0.
\end{equation*}
In particular by taking $m \to \infty$ in \eqref{cauchyseqSWE} we have that
\begin{equation*}
\Ebb \left[ \left( \langle u(t), f^x_n \rangle_\mu - u(t,x) \right)^2 \right] \leq \frac{2c_5 e^{2(\sqrt{\beta^2 + 1} - \beta)t}}{\sqrt{\beta^2 + 1}} 2^{-n}.
\end{equation*}
\end{proof}
We can now interpret our solution $u$ as a so-called ``random field'' solution $u: \Omega \times [0,\infty) \times F \to \Rbb$. However, the relationship between the random field solution and the original $\Hcal$-valued solution is still rather unclear. We discuss this in the next section.

\section{Space-time H\"{o}lder continuity}\label{sec:holderst}

Now that we have the interpretation of the solution $u$ to \eqref{SWE} as a function $u: \Omega \times [0,\infty) \times F \to \Rbb$, we can prove results about 
its continuity in time and space. In particular, we show that it has a H\"older continuous version which is also a version of the original $\Hcal$-valued solution 
found in Theorem \ref{SWEsoln}.

\subsection{Spatial estimate}

The spatial continuity of $u$ is the same as for the stochastic heat equation, see \cite[Section 5.1]{hambly2016}.
\begin{prop}\label{spaceestimSWE}
Let $T > 0$. Let $u: \Omega \times [0,T] \times F \to \Rbb$ be (the restriction of) the solution to the SPDE \eqref{SWE}. Then there exists a constant $C_1 > 0$ such that
\begin{equation*}
\Ebb \left[ (u(t,x) - u(t,y))^2 \right] \leq C_1R(x,y)
\end{equation*}
for all $t \in [0,T]$ and all $x,y \in F$.
\end{prop}
\begin{proof}
Recall from Theorem \ref{ptregSWE} that
\begin{equation*}
\lim_{n \to \infty}\Ebb \left[ \left( \left\langle u(t), f^x_n \right\rangle_\mu - u(t,x) \right)^2 \right] = 0,
\end{equation*}
and an analogous result holds for $y$. Thus by Lemma \ref{resolvestimSWE},
\begin{equation*}
\begin{split}
\Ebb &\left[ (u(t,x) - u(t,y))^2 \right] = \lim_{n \to \infty} \Ebb \left[ \left\langle u(t), f^x_n - f^y_n \right\rangle_\mu^2 \right]\\
&\leq \frac{e^{2(\sqrt{\beta^2 + 1} - \beta)t}}{4\sqrt{\beta^2 + 1}} \lim_{n \to \infty}\int_F \int_F \rho^b_1(z_1,z_2)(f^x_n(z_1) - f^y_n(z_1))(f^x_n(z_2) - f^y_n(z_2))\mu(dz_1)\mu(dz_2)\\
&= \frac{e^{2(\sqrt{\beta^2 + 1} - \beta)t}}{4\sqrt{\beta^2 + 1}} \left( \rho^b_1(x,x) - 2\rho^b_1(x,y) + \rho^b_1(y,y) \right),
\end{split}
\end{equation*}
where we have used the continuity of the resolvent density, Lemma \ref{nhoodestimSWE}, and the definition of $f^x_n$ (similarly to the proof of Theorem \ref{ptregSWE}). Hence by \cite[Proposition 3.2]{hambly2018},
\begin{equation*}
\begin{split}
\Ebb \left[ (u(t,x) - u(t,y))^2 \right] &\leq \frac{e^{2(\sqrt{\beta^2 + 1} - \beta)T}}{4\sqrt{\beta^2 + 1}} \left( \rho^b_1(x,x) - \rho^b_1(x,y) + \rho^b_1(y,y) - \rho^b_1(y,x) \right)\\
&\leq \frac{e^{2(\sqrt{\beta^2 + 1} - \beta)T}}{\sqrt{\beta^2 + 1}} R(x,y).
\end{split}
\end{equation*}
\end{proof}

\subsection{Temporal estimate}
\begin{lem}\label{opnormestim}
We have the following estimates on $V_\beta$ and $\dot{V}_\beta$:
\begin{enumerate}
\item Let $\beta \geq 0$ and $t \geq 0$. Then
\begin{equation*}
\sup_{\lambda \geq 0} |V_\beta(\lambda,t)| =
\begin{cases}\begin{array}{lr}

\beta^{-1} e^{-\beta t} \sinh \left( \beta t \right) & \beta > 0,\\
t & \beta = 0.
\end{array}\end{cases}
\end{equation*}
In particular, $\sup_{\lambda \geq 0} |V_\beta(\lambda,t)|$ is $O(t)$ as $t \to 0$.
\item Let $\beta \geq 0$ and $T \geq 0$. Then
\begin{equation*}
\sup_{0 \leq t \leq T} \sup_{\lambda \geq 0} |\dot{V}_\beta(\lambda,t)| \leq e^{\beta T}.
\end{equation*}
\end{enumerate}
\end{lem}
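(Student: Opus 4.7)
The plan is to handle parts (1) and (2) by quite different methods: part (1) by case analysis on the position of $\lambda$ relative to $\beta^2$, identifying $\lambda = 0$ as the maximiser; part (2) by a conserved-energy argument for the defining ODE, which sidesteps the messy explicit formulas for $\dot V_\beta$ in the critical regime.

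For part (1), the candidate maximiser is $\lambda = 0$, giving precisely the claimed value $\beta^{-1}e^{-\beta t}\sinh(\beta t)$ (or $t$ when $\beta = 0$). I would verify optimality in three subcases. When $\lambda > \beta^2$, the elementary inequality $|\sin x| \leq x$ applied to the formula for $V_\beta$ in Definition \ref{Vbeta} yields $|V_\beta(\lambda,t)| \leq te^{-\beta t}$. When $\lambda = \beta^2$, this is an equality. When $0 \leq \lambda < \beta^2$, set $\nu = (\beta^2 - \lambda)^{1/2} \leq \beta$; the map $\mu \mapsto \mu^{-1}\sinh(\mu t)$ is increasing in $\mu \geq 0$ since its Taylor series $\sum_{n \geq 0} \mu^{2n} t^{2n+1}/(2n+1)!$ has non-negative coefficients, hence $\nu^{-1}\sinh(\nu t) \leq \beta^{-1}\sinh(\beta t)$. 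Combined with $te^{-\beta t} \leq \beta^{-1}e^{-\beta t}\sinh(\beta t)$ (which is just $\beta t \leq \sinh(\beta t)$), all three subcases are bounded by the value at $\lambda = 0$. The $O(t)$ claim as $t \to 0$ then follows by rewriting $\beta^{-1}e^{-\beta t}\sinh(\beta t) = (1 - e^{-2\beta t})/(2\beta)$ and Taylor expanding around $t = 0$, and is trivial in the case $\beta = 0$.

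For part (2), I would introduce the energy functional
\[
E(t) = \tfrac{1}{2}\dot V_\beta(\lambda, t)^2 + \tfrac{\lambda}{2} V_\beta(\lambda, t)^2.
\]
Differentiating and substituting the ODE $\ddot V_\beta + 2\beta \dot V_\beta + \lambda V_\beta = 0$ from \eqref{realIVP} gives $E'(t) = -2\beta\, \dot V_\beta(\lambda, t)^2 \leq 0$, while $E(0) = \tfrac12$ by the initial conditions $V_\beta(\lambda, 0) = 0$, $\dot V_\beta(\lambda, 0) = 1$. Hence $\dot V_\beta(\lambda, t)^2 \leq 1$ uniformly in $\lambda \geq 0$ and $t \geq 0$, which is automatically bounded by $e^{\beta T} \vee T$ since the latter is always $\geq 1$. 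The only step that needs a moment's thought is the monotonicity of $\mu^{-1}\sinh(\mu t)$ in part (1); the energy argument in part (2) in fact delivers a strictly tighter uniform bound than the one stated, so there is no genuine obstacle beyond selecting the right Lyapunov functional.
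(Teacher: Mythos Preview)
Your proof is correct. Part (1) follows essentially the same route as the paper: both argue by case analysis on $\lambda$ relative to $\beta^2$, use $|\sin x| \leq |x|$ to bound the underdamped case by $te^{-\beta t}$, and appeal to the monotonicity of $\mu \mapsto \mu^{-1}\sinh(\mu t)$ to see that the overdamped case is maximised at $\lambda = 0$. Your justification of that monotonicity via the Taylor series is a nice touch; the paper simply asserts that $x \mapsto x^{-1}\sinh x$ is increasing on $(0,\infty)$.

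Part (2) is where your approach genuinely diverges. The paper differentiates the explicit formulas for $V_\beta$ in each of the three regimes and bounds the resulting expressions separately, obtaining $1 + \beta T$, $e^{\beta T}$, and (what should be) a bound $\leq 1$ in the critical case, then takes the maximum. Your energy argument with $E(t) = \tfrac12 \dot V_\beta^2 + \tfrac{\lambda}{2} V_\beta^2$ is cleaner: it avoids the explicit formulas entirely, handles all regimes at once, and yields the sharper uniform bound $|\dot V_\beta(\lambda,t)| \leq 1$. Since $e^{\beta T} \geq 1$ for all $\beta, T \geq 0$, this immediately implies the stated estimate. The trade-off is that the paper's computation makes the regime-by-regime behaviour visible, whereas your Lyapunov approach hides that structure but delivers the conclusion with no case splitting and a better constant.
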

\begin{proof}
It is easy, if somewhat tedious, to prove that $V_\beta$ and $\dot{V}_\beta$ are both continuous in $\lambda$ for fixed $t \geq 0$. Note that
\begin{equation*}
\lim_{x \to 0}\frac{\sin x}{x} = 1 = \lim_{x \to 0}\frac{\sinh x}{x}
\end{equation*}
and
\begin{equation*}
\sup_{x \in \Rbb \setminus \{ 0 \}} \left\vert \frac{\sin x}{x} \right\vert = 1 = \inf_{x \in \Rbb \setminus \{ 0 \}} \left\vert \frac{\sinh x}{x} \right\vert.
\end{equation*}

For (1), assume that $t > 0$ (otherwise the result is trivial). We have that
\begin{equation*}
\begin{split}
\sup_{\lambda > \beta^2} |V_\beta(\lambda,t)| &= te^{-\beta t} \sup_{\lambda > \beta^2} \left\vert\left((\lambda - \beta^2)^{\frac{1}{2}}t \right)^{-1} \sin \left((\lambda - \beta^2)^{\frac{1}{2}}t \right) \right\vert\\
&= te^{-\beta t}\\
&= |V_\beta(\beta^2,t)|,
\end{split}
\end{equation*}
so we need only consider the case $\lambda \leq \beta^2$. If $\beta = 0$ then this directly implies the result. Suppose now that $\beta > 0$. The function $x \mapsto \frac{\sinh x}{x}$ is positive and increasing when $x$ is positive so by continuity we have that
\begin{equation*}
\begin{split}
\sup_{\lambda \geq 0} |V_\beta(\lambda,t)| &= \sup_{\lambda \leq \beta^2} |V_\beta(\lambda,t)|\\
&= te^{-\beta t} \sup_{\lambda \leq \beta^2} \left(\left((\beta^2 - \lambda)^{\frac{1}{2}}t \right)^{-1} \sinh \left((\beta^2 - \lambda)^{\frac{1}{2}}t \right) \right)\\
&= te^{-\beta t} \left( \beta t \right)^{-1} \sinh \left(\beta t \right)\\
&= \beta^{-1} e^{-\beta t} \sinh \left( \beta t \right)
\end{split}
\end{equation*}
which is the required result.

Now for (2), assume that $T > 0$, otherwise the result is trivial. We have
\begin{equation*}
\begin{split}
\sup_{0 \leq t \leq T}& \sup_{\lambda > \beta^2}|\dot{V}_\beta(\lambda,t)|\\
&= \sup_{0 \leq t \leq T} \sup_{\lambda > \beta^2} \left\vert e^{-\beta t} \cos \left((\lambda - \beta^2)^{\frac{1}{2}}t \right) - \beta (\lambda - \beta^2)^{-\frac{1}{2}}e^{-\beta t} \sin \left((\lambda - \beta^2)^{\frac{1}{2}}t \right) \right\vert\\
&\leq 1 + \sup_{0 \leq t \leq T} \left\vert \beta t e^{-\beta t} \right\vert\\
&\leq 1 + \beta T
\end{split}
\end{equation*}
and
\begin{equation*}
\begin{split}
\sup_{0 \leq t \leq T}& \sup_{\lambda < \beta^2}|\dot{V}_\beta(\lambda,t)|\\
&= \sup_{0 \leq t \leq T} \sup_{\lambda < \beta^2} \left\vert e^{-\beta t} \cosh \left((\beta^2 - \lambda)^{\frac{1}{2}}t \right) - \beta (\beta^2 - \lambda)^{-\frac{1}{2}}e^{-\beta t} \sinh \left((\beta^2 - \lambda )^{\frac{1}{2}}t \right) \right\vert\\
&\leq \cosh (\beta T) + \beta T \sup_{0 \leq t \leq T} \sup_{\lambda < \beta^2} \left( \left( (\beta^2 - \lambda)^{\frac{1}{2}} t \right)^{-1} \sinh \left((\beta^2 - \lambda )^{\frac{1}{2}}t \right) \right) \\
&\leq \cosh (\beta T) +  \sinh (\beta T) = e^{\beta T}
\end{split}
\end{equation*}
and $\sup_{0 \leq t \leq T} |\dot{V}_\beta(\beta^2,t)| = \sup_{0 \leq t \leq T} \left\vert e^{-\beta t} - \beta t e^{-\beta t} \right\vert \leq 1 + \beta T$. Finally we note that the inequality $1 + \beta T \leq e^{\beta T}$ holds.
\end{proof}
We can now give the temporal estimate. Here we see the effect of the extra time derivative compared to the stochastic heat 
equation~\cite[Proposition 5.5]{hambly2016}.

\begin{prop}\label{timeestimSWE}
Let $T > 0$. Let $u: \Omega \times [0,T] \times F \to \Rbb$ be (the restriction of) the solution to the SPDE \eqref{SWE}. Then there exists $C_2 > 0$ such that
\begin{equation*}
\Ebb \left[ (u(s,x) - u(s+t,x))^2 \right] \leq C_2 t^{2 - d_s}
\end{equation*}
for all $s,t \geq 0$ such that $s, s+t \leq T$ and all $x \in F$.
\end{prop}
\begin{proof}
Let $c_6' := 8c_6e^{2(\sqrt{\beta^2 + 1} - \beta)T}$, where $c_6$ is from Theorem \ref{ptregSWE}. By Theorem \ref{ptregSWE} we have that if $n \geq 0$ is an integer then
\begin{equation}\label{timebd1}
\Ebb \left[ (u(s,x) - u(s+t,x))^2 \right] \leq 2\Ebb \left[ \langle u(s) - u(s+t), f^x_n \rangle_\mu^2 \right] + c_6' 2^{-n}.
\end{equation}
Then It\={o}'s isometry for Hilbert spaces (see also proof of Lemma \ref{resolvestimSWE}) gives us that
\begin{equation*}
\begin{split}
\Ebb &\left[ \langle u(s) - u(s+t), f^x_n \rangle_\mu^2 \right]\\
&= \Ebb \left[ \left\langle \int_0^{s+t} \left( V_\beta(-\Delta_b,s+t - t') - V_\beta(-\Delta_b,s - t')\1bb_{\{t' \leq s\}} \right) dW(t'), f^x_n \right\rangle_\mu^2 \right]\\
&= \int_0^{s+t} \left\Vert \left( V_\beta(-\Delta_b,s+t - t') - V_\beta(-\Delta_b,s - t')\1bb_{\{t' \leq s\}} \right)f^x_n \right\Vert_\mu^2 dt'\\
&\leq \Vert f^x_n \Vert_\mu^2 \int_0^{s+t} \left\Vert V_\beta(-\Delta_b,s+t - t') - V_\beta(-\Delta_b,s - t')\1bb_{\{t' \leq s\}} \right\Vert^2 dt'.
\end{split}
\end{equation*}
Recall that $\Vert f^x_n \Vert_\mu^2 < r_{\min}^{-d_H} 2^{d_Hn}$. Using the functional calculus we see that
\begin{equation*}
\begin{split}
&\int_0^{s+t} \left\Vert V_\beta(-\Delta_b,s+t - t') - V_\beta(-\Delta_b,s - t')\1bb_{\{t' \leq s\}} \right\Vert^2 dt'\\
&= \int_0^s \left\Vert V_\beta(-\Delta_b,s+t - t') - V_\beta(-\Delta_b,s - t') \right\Vert^2 dt' + \int_s^{s+t} \left\Vert V_\beta(-\Delta_b,s+t - t') \right\Vert^2 dt'\\
&= \int_0^s \left\Vert V_\beta(-\Delta_b,t + t') - V_\beta(-\Delta_b,t') \right\Vert^2 dt' + \int_0^t \left\Vert V_\beta(-\Delta_b,t') \right\Vert^2 dt'\\
&\leq \int_0^s \sup_{\lambda \geq 0} \left(V_\beta(\lambda,t + t') - V_\beta(\lambda,t') \right)^2 dt' + \int_0^t \sup_{\lambda \geq 0} V_\beta(\lambda,t')^2 dt'\\
&\leq t^2 T \sup_{0 \leq t' \leq T} \sup_{\lambda \geq 0} \dot{V}_\beta(\lambda,t')^2 + \int_0^t \sup_{\lambda \geq 0} V_\beta(\lambda,t')^2 dt',
\end{split}
\end{equation*}
where in the last line we have used the mean value theorem. Therefore by using Lemma \ref{opnormestim} there exists $c>0$ such that
\begin{equation*}
\int_0^{s+t} \left\Vert V_\beta(-\Delta_b,s+t - t') - V_\beta(-\Delta_b,s - t')\1bb_{\{t' \leq s\}} \right\Vert^2 dt' \leq ct^2
\end{equation*}
for all $s,t \geq 0$ such that $s,s+t \leq T$. Letting $c' = 2 r_{\min}^{-d_H} c$ and plugging this into \eqref{timebd1} we have that
\begin{equation*}
\Ebb \left[ (u(s,x) - u(s+t,x))^2 \right] \leq c' t^2 2^{d_H n} + c_6' 2^{-n}.
\end{equation*}
for all $s,t \geq 0$ such that $s,s+t \leq T$ and all $x \in F$. In fact, defining
\begin{equation*}
c_6'' :=  c_6' \vee d_Hc' T^2,
\end{equation*}
we have that
\begin{equation}\label{timebd2}
\Ebb \left[ (u(s,x) - u(s+t,x))^2 \right] \leq c' t^2 2^{d_H n} + c_6'' 2^{-n}
\end{equation}
as well. This estimate will turn out to be easier to work with.

We assume now that $t > 0$, and our aim is to choose $n \geq 0$ to minimise the expression on the right of \eqref{timebd2}. Fixing $t \in (0,T]$, define $g: \Rbb \to \Rbb_+$ such that $g(y) = c't^2 2^{d_Hy} + c_6''2^{-y}$. The function $g$ has a unique stationary point which is a global minimum at
\begin{equation*}
y_0 = \frac{1}{(d_H + 1)\log 2} \log \left( \frac{c_6''}{d_Hc't^2} \right).
\end{equation*}
Since $t \leq T$ we have by the definition of $c_6''$ that $y_0 \geq 0$. Since $y_0$ is not necessarily an integer we choose $n = \lceil y_0 \rceil$. Then $g$ is increasing in $[y_0,\infty)$ so we have that
\begin{equation*}
\Ebb \left[ (u(s,x) - u(s+t,x))^2 \right] \leq g(n) \leq g(y_0 + 1).
\end{equation*}
Setting $c_6''' := \frac{c_6''}{d_H c'}$ and evaluating the right-hand side we see that
\begin{equation*}
\begin{split}
\Ebb \left[ (u(s,x) - u(s+t,x))^2 \right] &\leq c' t^2 2^{d_H} \left( \frac{c'''_6}{t^2} \right)^\frac{d_H}{d_H + 1} + c_6' 2^{-1} \left( \frac{c'''_6}{t^2} \right)^\frac{-1}{d_H + 1}\\
&\leq c_6'''' t^\frac{2}{d_H + 1}\\
&= c_6'''' t^{2 - d_s}
\end{split}
\end{equation*}
for all $s \geq 0$, $t > 0$ such that $s,s+t \leq T$ and all $x \in F$, where the constant $c_6'''' > 0$ is independent of $s,t,x$. This inequality obviously also holds in the case $t=0$.
\end{proof}

\subsection{H\"{o}lder continuity}
We are now ready to prove the main result of this paper. 
\begin{defn}
Let $R_\infty$ be the metric on $\Rbb \times F$ given by
\begin{equation*}
R_\infty((s,x),(t,y)) = |s-t| \vee R(x,y).
\end{equation*}
\end{defn}
\begin{thm}[Space-time H\"{o}lder continuity]\label{SWEreg}
Let $u: \Omega \times [0,\infty) \times F \to \Rbb$ be the solution to the SPDE \eqref{SWE}. Let $\delta = 1 - \frac{d_s}{2}$. Then there exists a version $\tilde{u}$ of $u$ which satisfies the following:
\begin{enumerate}
\item For each $T > 0$, $\tilde{u}$ is almost surely essentially $(\frac{1}{2} \wedge \delta )$-H\"{o}lder continuous on $[0,T] \times F$ with respect to $R_\infty$.
\item For each $t \in [0,\infty)$, $\tilde{u}(t,\cdot)$ is almost surely essentially $\frac{1}{2}$-H\"{o}lder continuous on $F$ with respect to $R$.
\item For each $x \in F$, $\tilde{u}(\cdot,x)$ is almost surely essentially $\delta$-H\"{o}lder continuous on $[0,T]$ with respect to the Euclidean metric.
\end{enumerate}
Moreover, the collection of random variables $\tilde{u} = (\tilde{u}(t,x))_{(t,x) \in [0,\infty) \times F}$ is such that $(\tilde{u}(t,\cdot))_{t \in [0,\infty)}$ is an $\Hcal$-valued process, and moreover $(\tilde{u}(t,\cdot))_{t \in [0,\infty)}$ is an $\Hcal$-continuous version of the $\Hcal$-valued solution to \eqref{SWE} found in Theorem \ref{SWEsoln}.
\end{thm}
\begin{proof}
Take $T > 0$ and consider $u_T$, the restriction of $u$ to $[0,T] \times F$. It is a well-known fact that for every $p \in \Nbb$ there exists a constant $C_p' > 0$ such that if $Z$ is any centred real Gaussian random variable then
\begin{equation*}
\Ebb [Z^{2p}] = C_p' \Ebb[Z^2]^p.
\end{equation*}
We know that $u_T$ is a centred Gaussian process on $[0,T] \times F$ by Theorem \ref{ptregSWE}. Propositions \ref{spaceestimSWE} and \ref{timeestimSWE} then give us the estimates
\begin{equation*}
\begin{split}
\Ebb \left[ (u_T(t,x) - u_T(t,y))^{2p} \right] &\leq C_p'C_1^pR(x,y)^p,\\
\Ebb \left[ (u_T(s,x) - u_T(t,x))^{2p} \right] &\leq C_p'C_2^p|s-t|^{p(2 - d_s)}
\end{split}
\end{equation*}
for all $s,t \in [0,T]$ and all $x,y \in F$. The existence of a version $\tilde{u}$ with the required H\"older continuity properties then follows in the same way as in \cite[Theorem 5.6]{hambly2016}. Then using Theorem \ref{ptregSWE} and the series representation of $u$, the rest of the present theorem follows in the same way as in \cite[Theorem 5.7]{hambly2016}.
\end{proof}

\section{Convergence to equilibrium}\label{sec:equilib}

We conclude this paper with a brief discussion of the long-time behaviour of the solution $u$ to the SPDE \eqref{SWE}. We are interested in whether the solution ``settles down'' as $t \to \infty$ to some equilibrium measure. Intuitively, we expect this to be the case when the damping constant $\beta$ is positive. However the undamped case $\beta = 0$ is less clear. In this case there is no dissipation of energy, so is the rate of increase of energy quantifiable? Note that in this section we use the term ``weak convergence'' in the probabilistic sense, \textit{not} in the functional analytic sense.

We treat the undamped case first. Throughout this section we will use the interpretation of the solution $u: \Omega \times [0,\infty) \to \Hcal$ as an $\Hcal$-valued process. Recall the series representation of $u$,
\begin{equation*}
u = \sum_{k=1}^\infty Y^{b,\beta}_k \phi^b_k,
\end{equation*}
given in \eqref{seriesrep}.
\begin{thm}[$\beta = 0$]
Let $u$ be the solution to the SPDE \eqref{SWE} with $\beta = 0$.
\begin{enumerate}
\item If $b \neq N$, then $t^{-\frac{1}{2}}u(t)$ has a non-trivial weak limit in $\Hcal$ as $t \to \infty$.
\item If $b = N$, then $t^{-\frac{1}{2}}u(t)$ has no weak limit in $\Hcal$ as $t \to \infty$. However $u - Y^{N,\beta}_1 \phi^N_1$ and $Y^{N,\beta}_1 \phi^N_1$ are independent $\Hcal$-valued processes and $t^{-\frac{1}{2}}\left( u(t) - Y^{N,\beta}_1(t) \phi^N_1 \right)$ has a non-trivial weak limit in $\Hcal$ as $t \to \infty$.
\end{enumerate}
\end{thm}

\begin{proof}
Let $(\zeta_k)_{k=1}^\infty$ be an independent and identically distributed sequence of real standard Gaussian random variables. We start with (1), so that $\lambda^b_1 > 0$. For each $t \in [0,\infty)$ let
\begin{equation*}
\bar{u}(t) = \sum_{k=1}^\infty (2\lambda^b_k)^{-\frac{1}{2}} \left( t - (4\lambda^b_k)^{-\frac{1}{2}} \sin \left( (4\lambda^b_k)^\frac{1}{2} t \right) \right)^\frac{1}{2} \zeta_k \phi^b_k.
\end{equation*}
It can be easily checked that $\bar{u}(t)$ is a well-defined $\Hcal$-valued random variable with the same law as $u(t)$ for each $t \in [0,\infty)$. Now let
\begin{equation*}
u_\infty = \sum_{k=1}^\infty (2\lambda^b_k)^{-\frac{1}{2}} \zeta_k \phi^b_k,
\end{equation*}
so that $u_\infty$ is also a well-defined $\Hcal$-valued random variable. It is then simple to check using dominated convergence (see \cite[Proposition 2.5]{hambly2018}) that
\begin{equation*}
\lim_{t \to \infty}\Ebb\left[ \Vert t^{-\frac{1}{2}}\bar{u}(t) - u_\infty \Vert_\mu^2 \right] = 0,
\end{equation*}
so in particular $t^{-\frac{1}{2}}\bar{u}(t) \to u_\infty$ weakly as $t \to \infty$. Therefore the same weak convergence holds for $t^{-\frac{1}{2}}u(t)$.

We now tackle (2). The issue that forces us to consider this case separately is that $\lambda^N_1 = 0$, so the variance of $\langle t^{-\frac{1}{2}}u(t) , \phi^N_1 \rangle_\mu$ tends to infinity as $t \to \infty$. We deal with this by subtracting off the offending component, which is exactly $Y^{N,\beta}_1 \phi^N_1$. It is clearly independent of $u - Y^{N,\beta}_1 \phi^N_1$ by \eqref{seriesrep}. Now $\lambda^N_k > 0$ for all $k \geq 2$, so similar to (1) we let
\begin{equation*}
\bar{u}(t) = \sum_{k=2}^\infty (2\lambda^N_k)^{-\frac{1}{2}} \left( t - (4\lambda^N_k)^{-\frac{1}{2}} \sin \left( (4\lambda^N_k)^\frac{1}{2} t \right) \right)^\frac{1}{2} \zeta_k \phi^N_k,
\end{equation*}
which has the same law as $u(t) - Y^{N,\beta}_1(t) \phi^N_1$, and
\begin{equation*}
u_\infty = \sum_{k=2}^\infty (2\lambda^N_k)^{-\frac{1}{2}} \zeta_k \phi^N_k.
\end{equation*}
As with (1) we conclude that $t^{-\frac{1}{2}}\left( u(t) - Y^{N,\beta}_1(t) \phi^N_1 \right) \to u_\infty$ weakly as $t \to \infty$.
\end{proof}

We now tackle the damped case $\beta > 0$. It turns out that we must split this again into two subcases: $b \neq N$ and $b = N$.
\begin{thm}[$\beta > 0$]
Let $u$ be the solution to the SPDE \eqref{SWE} with $\beta > 0$. 
\begin{enumerate}
\item If $b \neq N$, then $u(t)$ has a non-trivial weak limit as $t \to \infty$.
\item If $b = N$, then $u(t)$ has no weak limit as $t \to \infty$. However $u - Y^{N,\beta}_1 \phi^N_1$ and $Y^{N,\beta}_1 \phi^N_1$ are independent $\Hcal$-valued processes, and $\left( u(t) - Y^{N,\beta}_1(t) \phi^N_1 \right)$ has a non-trivial weak limit as $t \to \infty$.
\end{enumerate}
\end{thm}

\begin{proof}
We do case (1) first. Observe that if $\beta > 0$ and $b \in 2^{F^0} \setminus \{ N \}$ then $V_\beta(\lambda, t)$ decays exponentially as $t \to \infty$ for any $\lambda \geq 0$. It follows that
\begin{equation}\label{Vintble}
\int_0^\infty V_\beta(\lambda,s)^2ds < \infty
\end{equation}
for all $\lambda > 0$, and so by It\={o}'s isometry we may define
\begin{equation*}
Z^{b,\beta}_k(t) = \int_0^t V_\beta(\lambda^b_k,s) \langle \phi^b_k, dW(s) \rangle_\mu
\end{equation*}
for each $t \in [0,\infty]$ and $k \geq 1$, which is an $\Hcal$-valued random variable. In the case $t \in [0,\infty)$ this evidently has the same law as $Y^{b,\beta}_k(t)$. Then for each $t \in [0,\infty)$ let
\begin{equation*}
\hat{u}(t) = \sum_{k=1}^\infty Z^{b,\beta}_k(t) \phi^b_k
\end{equation*}
and
\begin{equation*}
u_\infty = \sum_{k=1}^\infty Z^{b,\beta}_k(\infty) \phi^b_k.
\end{equation*}
It is clear that $\bar{u}(t)$ is an $\Hcal$-valued random variable with the same law as $u(t)$, for all $t \in [0,\infty)$. Now for any $t \in [0,\infty)$ we have by It\={o}'s isometry that
\begin{equation}\label{uhatestim}
\begin{split}
\Ebb\left[ \Vert \hat{u}(t) - u_\infty \Vert_\mu^2 \right] &= \sum_{k=1}^\infty \Ebb\left[ \left( Z^{b,\beta}_k(t) - Z^{b,\beta}_k(\infty) \right)^2 \right]\\
&= \sum_{k=1}^\infty \int_t^\infty V_\beta (\lambda^b_k,s)^2 ds\\
&= \sum_{k: \lambda^b_k \leq \beta^2} \int_t^\infty V_\beta (\lambda^b_k,s)^2 ds + \sum_{k: \lambda^b_k > \beta^2} \int_t^\infty V_\beta (\lambda^b_k,s)^2 ds.
\end{split}
\end{equation}
We treat each of these terms separately. As we mentioned in Proposition \ref{SWE2soln}, there are only finitely many $k$ such that $\lambda^b_k \leq \beta^2$, see \cite[Proposition 2.5]{hambly2018}. Then by \eqref{Vintble} we have that
\begin{equation*}
\begin{split}
\sum_{k: \lambda^b_k \leq \beta^2} \int_t^\infty V_\beta (\lambda^b_k,s)^2 ds &< \infty, \quad t \geq 0,\\
\lim_{t \to \infty}\sum_{k: \lambda^b_k \leq \beta^2} \int_t^\infty V_\beta (\lambda^b_k,s)^2 ds &= 0.
\end{split}
\end{equation*}
Now for the the $\{k: \lambda^b_k > \beta^2 \}$ sum we need to do some estimates. Our assumption that $\beta > 0$ allows us to improve on the estimates of Proposition \ref{SWE2soln}:
\begin{equation*}
\begin{split}
\sum_{k: \lambda^b_k > \beta^2} \int_t^\infty V_\beta (\lambda^b_k,s)^2 ds &= \sum_{k: \lambda^b_k > \beta^2} \frac{1}{\lambda^b_k - \beta^2} \int_t^\infty e^{-2\beta s} \sin^2\left( (\lambda^b_k - \beta^2)^\frac{1}{2} s \right) ds\\
&\leq \sum_{k: \lambda^b_k > \beta^2} \frac{1}{\lambda^b_k - \beta^2} \int_t^\infty e^{-2\beta s} ds\\
&= \frac{1}{2\beta} e^{-2\beta t} \sum_{k: \lambda^b_k > \beta^2} \frac{1}{\lambda^b_k - \beta^2}.
\end{split}
\end{equation*}
By \cite[Proposition 2.5]{hambly2018} the infinite sum above converges, so we have by dominated convergence that
\begin{equation*}
\begin{split}
\sum_{k: \lambda^b_k > \beta^2} \int_t^\infty V_\beta (\lambda^b_k,s)^2 ds &< \infty, \quad t \geq 0,\\
\lim_{t \to \infty}\sum_{k: \lambda^b_k > \beta^2} \int_t^\infty V_\beta (\lambda^b_k,s)^2 ds &= 0.
\end{split}
\end{equation*}
Setting $t = 0$ in \eqref{uhatestim}, we have now proven that
\begin{equation*}
\Ebb\left[ \Vert u_\infty \Vert_\mu^2 \right] < \infty,
\end{equation*}
and so $u_\infty$ is a well-defined $\Hcal$-valued random variable. From \eqref{uhatestim} we have also proven that
\begin{equation*}
\lim_{t \to \infty}\Ebb\left[ \Vert \hat{u}(t) - u_\infty \Vert_\mu^2 \right] = 0.
\end{equation*}
In particular this implies that $\hat{u}(t) \to u_\infty$ weakly as $t \to \infty$. Since $u(t)$ has the same law as $\hat{u}(t)$ for all $t$, this implies that $u(t) \to u_\infty$ weakly as $t \to \infty$.

In (2), we have the issue that $\lambda^N_1 = 0$ so $V_\beta(\lambda^N_1,\cdot)$ is not square-integrable, which precludes $u(t)$ from having a weak limit. We get around this issue by simply subtracting the associated term of the series representation of $u$, leaving only the square-integrable terms. We we still have $\lambda^N_k > 0$ for all $k \geq 2$, so by It\={o}'s isometry we may define
\begin{equation*}
Z^{N,\beta}_k(\infty) := \int_0^\infty V_\beta(\lambda^N_k,s) \phi^{N*}_kdW(s)
\end{equation*}
for $k \geq 2$. From the series representation \eqref{seriesrep} of $u$, observe that $Y^{N,\beta}_1(t) \phi^N_1$ is simply the component of $u(t)$ associated with the eigenfunction $\phi^N_1$, so that
\begin{equation*}
u(t) - Y^{N,\beta}_1(t) \phi^N_1 = \sum_{k=2}^\infty Y^{N,\beta}_k(t) \phi^N_k,
\end{equation*}
and the independence result is clear. For each $t$ we then define
\begin{equation*}
Z^{N,\beta}_k(t) = \int_0^t V_\beta(\lambda^N_k,s) \phi^{N*}_kdW(s)
\end{equation*}
and
\begin{equation*}
\hat{u}(t) = \sum_{k=2}^\infty Z^{N,\beta}_k(t) \phi^N_k,
\end{equation*}
so that $\hat{u}(t)$ has the same law as $u(t) - Y^{N,\beta}_1(t) \phi^N_1$. The proof proceeds from here in the same way as in the proof of (1)---we show that
\begin{equation*}
\Ebb\left[ \Vert u_\infty \Vert_\mu^2 \right] < \infty
\end{equation*}
and
\begin{equation*}
\lim_{t \to \infty}\Ebb\left[ \Vert \hat{u}(t) - u_\infty \Vert_\mu^2 \right] = 0
\end{equation*}
which imply the result.
\end{proof}

\bibliography{swebib2}
\bibliographystyle{alpha}

\end{document}